\newtheorem{theorem}{Theorem}
\newtheorem{proposition}[theorem]{Proposition}
\newtheorem{corollary}[theorem]{Corollary}
\newtheorem{lemma}[theorem]{Lemma}
\theoremstyle{remark}
\newtheorem{remark}{Remark}
\DeclareMathOperator*{\esssup}{ess\,sup}
\numberwithin{theorem}{section}
\DeclareMathOperator{\Z}{\mathbb{Z}}
\DeclareMathOperator{\R}{\mathbb{R}}
\DeclareMathOperator{\N}{\mathbb{N}}
\newcommand{\E}[1]{\mathbb{E}\left[#1\right]}
\newcommand{\eps}{\varepsilon}
\newcommand{\Em}[2]{\mathbb{E}^{#1}\left[#2\right]}
\DeclareMathOperator{\pr}{\mathbb{P}}
\newcommand*\diff{\mathop{}\!\mathrm{d}}
\newcommand{\Ex}[1]{\mathbb{E}\left[#1\right]}
\newcommand{\ind}[1]{\mathbbm{1}_{\{#1\}}}
\newcommand{\email}[1]{\href{mailto:#1}{#1}}
\title{Collisions of Random  Walks in Dynamic Random Environments}
\author{Noah Halberstam and Tom Hutchcroft}
\begin{document}

\maketitle

\begin{abstract}
We study dynamic random conductance models on $\Z^2$ in which the environment evolves as a reversible Markov process that is stationary under space-time shifts. We prove under a second moment assumption that two conditionally independent random walks in the same environment collide infinitely often almost surely. These results apply in particular to random walks on dynamical percolation. 

\end{abstract}

\section{Introduction}

A graph is said two have the \textbf{infinite collisions property} if two independent random walks started at the same location collide (occupy the same location at the same time) infinitely often almost surely.  For Euclidean lattices, Polya observed that the study of collisions can be reduced to the study of returns on an auxiliary lattice, and hence that the infinite collisions property holds if and only if the dimension is at most two.
In fact, \emph{for transitive graphs}, the infinite collisions property is always equivalent to recurrence: The number of collisions and the number of returns are geometric random variables with the same mean. 
For bounded degree graphs that are \emph{not} transitive, the infinite collisions property is strictly stronger than recurrence. Indeed, while it is easy to see that bounded degree transient graphs cannot have infinite collisions, Krishnapur and Peres \cite{krishnapur2004} showed that there exist bounded degree graphs, including the \textit{infinite comb graph}, that are recurrent but which do not have the infinite collisions property. See e.g.\ \cite{chen2011} for further examples.

Despite the existence of these counterexamples, it is natural to expect that the infinite collisions property is equivalent to recurrence for most graphs and networks arising in applications. Indeed, it is now known that the two properties are equivalent for many random walks in random environments that are spatially homogeneous in some distributional sense \cite{CCC,barlow2012}. The most general such result is due to Hutchcroft and Peres \cite{hutchcroft2015}, who proved that every recurrent \textit{reversible random rooted network} has the infinite collisions property. An important class of examples to which these result apply are the translation-invariant random conductance models on $\Z^d$; see \cite{biskup2011recent} for background. Note that while earier results such as those of \cite{barlow2012} had relied on a fine analysis of the random walk in specific examples, the method of \cite{hutchcroft2015} is entirely qualitative and does not rely on heat-kernel estimates. Further results on collisions of random walks in random environments include \cite{gallesco_2013,gantert2014recurrence,devulder2018collisions,devulder2019,chen2016gaussian}.

In this paper we study collisions of random walks on \textit{dynamic} random conductance models (dynamic RCMs), in which the environment itself is permitted to vary over time. Such models have recently been of burgeoning interest, with works establishing, for example, quenched invariance principles \cite{andres2014,BISKUP2018985,andres2018}, quenched and annealed local limit theorems \cite{andres2019local,ACS}, heat kernel estimates \cite{NashDyn,SDE}, and Green kernel asymptotics \cite{andres2020green}.
We restrict attention to the class of dynamic RCMs in which the conductances themselves form a \emph{strongly reversible} Markov process whose law is invariant under space-time shifts. We will refer to such environments as \textbf{stationary, strongly reversible Markovian environments}; see Section \ref{Background} for detailed definitions.
This class includes many of the most natural and interesting examples of dynamic RCMs appearing in the literature, including dynamical percolation \cite{Peres2015,Peres2017MixingTF,Hermon2020ACP,Peres2017MixingTF,peres2017quenched}, the simple symmetric exclusion process \cite{SPITZER1970246,avena2012,Redig2018SymmetricSE}, and  dynamic RCMs in which the conductances evolve according to an SDE such as those arising in the Helffer-Sjöstrand representation of gradient fields, see e.g.\ \cite{Helffer1994OnTC,SDE}.
Previous works studying random walks in general (reversible and non-reversible) Markovian environments include \cite{RWME,AVENA20183490,avena2016class}.

We now state our main theorem. We write $E_d$ for the edge set of $\Z^d$, and consider our random environments to be random locally integrable functions from $\R\times E_d$ to $[0,\infty)$. We say that a stationary Markovian random environment $\eta : \R\times E_d \to [0,\infty)$ is \textbf{strongly reversible} if the conditional distributions of $\eta$ and its reversal given the instantaneous sigma-algebra $\mathcal{F}_0$ are almost surely equal, where $\mathcal{F}_{[s,t]}$ is the sigma-algebra generated by the restriction of $\eta$ to $[s,t]$ and $\mathcal{F}_0 := \bigcap \{\mathcal{F}_{[s,t]} : s \leq 0 \leq t, s<t\}$; see Section~\ref{Background} for more detailed definitions.

\begin{theorem} \label{thm:main}
	Let $\eta:\R\times E_2\rightarrow[0,\infty)$ be a stationary random environment on $\Z^2$ and let $(X_t)_{t\in \R}$ and $(Y_t)_{t\in \R}$ be two doubly-infinite random walks on $\eta,$ both started from the origin at time zero, that are conditionally independent given the environment $\eta.$ Suppose that at least one of the following conditions holds:
	\begin{enumerate}[leftmargin=1.1cm]
		\item[\emph{(A1)}:] The environment $\eta$ is Markovian, strongly reversible, and satisfies the second moment condition $\norm{\eta}_2^2:=\sup_{a<b} \frac{1}{|b-a|^2} 
		\mathbb{E}[(\int_{a}^b \sum_{x\sim0}\eta_s(\{0,x\}) \dif s)^2]<\infty$.
		\item[\emph{(A2)}:] The backwards walk $(X_{-t})_{t\geq 0}$ satisfies a (quenched or annealed) invariance principle under Brownian scaling with Brownian motion on $\R^2$ as the limiting distribution.
	\end{enumerate}
	Then $X$ and $Y$ collide infinitely often almost surely:  the set $\{n\in\N:X_n=Y_n\}$ has infinite cardinality almost surely and the set $\{t\in[0,\infty):X_t=Y_t\}$ has infinite Lebesgue measure almost surely.
\end{theorem}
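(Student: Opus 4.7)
My plan is to prove infinite collisions via a second moment argument on the collision occupation $L_T := \int_0^T \ind{X_t = Y_t}\,dt$, with the goal of establishing $\Ex{L_T} \gtrsim \log T$ and $\Ex{L_T^2} \lesssim (\log T)^2$, and then concluding by Paley--Zygmund together with a zero-one law. First I would reduce (A1) to (A2): under strong reversibility of the Markovian environment with the stated $L^2$ control on the conductances, the ``environment seen from the walker'' is itself a reversible Markov process whose invariant measure is absolutely continuous with respect to the law of $\eta$ at a fixed time, so a Kipnis--Varadhan-type martingale approximation should yield the annealed Brownian invariance principle required by (A2). This reduction is where the hypothesis $\norm{\eta}_2^2 < \infty$ is consumed.

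Under (A2) the core calculation exploits conditional independence of $X$ and $Y$ given $\eta$: writing $p_{s,t}^\eta(x,y)$ for the quenched transition kernel,
\begin{align*}
\Ex{\ind{X_s = Y_s}\ind{X_t = Y_t}} = \Ex{\sum_{x,y} p_{0,s}^\eta(0,x)^2\, p_{s,t}^\eta(x,y)^2}
\end{align*}
for $0\le s\le t$. For the first moment, tightness of $X_s/\sqrt{s}$ furnished by the invariance principle concentrates the quenched law of $X_s$ on a ball of radius $O(\sqrt{s})$ with high probability, and Cauchy--Schwarz applied to $\sum_x p_{0,s}^\eta(0,x)^2$ delivers $\Ex{\ind{X_s = Y_s}} \gtrsim 1/s$. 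For the second moment, I would combine the display above with an annealed on-diagonal heat kernel upper bound of the form $\Ex{\sup_y p_{s,t}^\eta(x,y)} \lesssim 1/(1+t-s)$ to obtain $\Ex{\ind{X_s = Y_s}\ind{X_t = Y_t}} \lesssim 1/(s(1+t-s))$; integration then produces $\Ex{L_T^2} \lesssim (\log T)^2 \lesssim \Ex{L_T}^2$, as needed.

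Finally, to pass from $\pr{L_T \geq c\log T} \geq \delta > 0$ uniformly in $T$ to $L_\infty = \infty$ almost surely, I would invoke a zero-one law for tail events of the joint process $(\eta, X, Y)$ coming from space-time ergodicity of $\eta$ and conditional independence of $X,Y$; the discrete-time version of the theorem then follows by a parallel argument or by discretization against the Lebesgue statement. The main obstacle I anticipate is establishing the annealed on-diagonal heat kernel bound $\Ex{\sup_y p_{s,t}^\eta(x,y)} \lesssim 1/(1+t-s)$ in the absence of any uniform ellipticity assumption: adapting the classical Nash--Moser approach to a dynamic and possibly degenerate environment using only the $L^2$ moment condition on conductances is the technically most delicate point, and will likely require an averaging argument that exploits strong reversibility to trade pointwise ellipticity for ellipticity in an $L^1(\eta)$ sense.
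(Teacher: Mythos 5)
There are genuine gaps at three places, and together they sink the proposed scheme. First, the reduction of (A1) to (A2) is not justified: under the paper's hypotheses the conductances may be degenerate and unbounded, only $\norm{\eta}_2<\infty$ is assumed, and no ergodicity is assumed, so a Kipnis--Varadhan martingale approximation for the environment seen from the walker is far from automatic --- the known invariance principles for dynamic RCMs require either rates bounded above or ellipticity with moment conditions on the conductances \emph{and their reciprocals}. The actual proof never upgrades (A1) to an invariance principle; it extracts only the much weaker diffusive $L^2$ bound $\mathbb{E}\bigl[\max_{-t\leq s\leq t}\|X_s\|_2^2\bigr]\leq 25t\norm{\eta}_1$ from strong reversibility via a continuous-time maximal Markov-type inequality, and feeds that (through Markov's inequality) into a ``weak diffusive estimate'' hypothesis that is all the argument needs. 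Second, your second-moment computation hinges on the annealed on-diagonal bound $\mathbb{E}\bigl[\sup_y p^\eta_{s,t}(x,y)\bigr]\lesssim (1+t-s)^{-1}$, which you correctly flag as the delicate point: in this generality (no ellipticity, possibly vanishing conductances) such heat-kernel upper bounds are not known and can genuinely fail, and the whole point of the method the paper uses is that it is qualitative and avoids heat-kernel estimates entirely. Third, the final passage from $\pr(L_T\geq c\log T)\geq\delta$ to almost-sure infinite collisions via a tail zero-one law uses space-time ergodicity, which is not among the hypotheses of the theorem; and even granting ergodicity, the event that two walks driven by the \emph{same} environment collide infinitely often is not a tail event of the environment alone, so this step does not close. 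The Krishnapur--Peres comb examples show exactly that recurrence-type lower bounds on collision probabilities do not by themselves yield infinite collisions, so this upgrade is where the real work lies.

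The paper's route avoids all three obstacles: a mass-transport argument in the spirit of Hutchcroft--Peres (Proposition 3.1) shows that if the \emph{quenched} expected number of collisions of the backwards walks, $\mathbb{E}^\eta\sum_{n\geq0}\ind{X_{-n}=Y_{-n}}$, is infinite almost surely, then the forward walks collide infinitely often almost surely --- no second moment of the collision count, no heat-kernel upper bound, and no ergodicity are needed. The divergence of that quenched expectation is then obtained exactly by the Cauchy--Schwarz lower bound you also use, $\mathbb{P}^\eta(X_{-m}=Y_{-m})\geq c r^{-2}\,\mathbb{P}^\eta(X_{-m}\in\Lambda_r)^2$, combined only with the weak diffusive estimate (trivial under (A2), and under (A1) supplied by the Markov-type inequality). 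If you want to salvage your plan, the essential missing idea is precisely this replacement of the Paley--Zygmund/zero-one-law step by the mass-transport lemma, and the replacement of the invariance principle and heat-kernel input by the reversibility-based maximal displacement bound.
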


Invariance principles are known in the ergodic setting in the non-elliptic case with rates bounded from above (and $0$ only on intervals with lengths of finite expectation) \cite{BISKUP2018985}, and with elliptic rates under moment conditions on the conductances and their reciprocals \cite{andres2019local}. 
 Thus, dynamical percolation and the simple symmetric exclusion process are covered by either hypothesis (A1) or (A2). We stress however that the proof using reversibility is self-contained and does not rely on any previous results on dynamical percolation.

Both results will be deduced from the following more general theorem. 
Note that the hypotheses of this theorem hold trivially under the assumption (A2) of Theorem~\ref{thm:main}; In Section~\ref{subsec:Markovtype} we use the theory of \emph{Markov-type inequalities} to prove that they also hold under the assumption (A1).

\begin{theorem}[A weak diffusive estimate suffices]
\label{thm:main_diffusive}
	Let $\eta:\R\times E_2\rightarrow\R_{\geq 0}$ be a stationary random environment on $\Z^2$ and let $(X_t)_{t\in \R}$ and $(Y_t)_{t\in \R}$ be two doubly-infinite random walks on $\eta,$ both started from the origin, that are conditionally independent given the environment $\eta.$ Suppose that for every $\eps>0$ there exists $K<\infty$ and $\delta>0$ such that
	\begin{equation}
	\label{eq:diffusive_assumption}
	\mathbb{P}\left(\limsup_{n\to\infty} \min_{0 \leq m \leq n} \mathbb{P}^\eta\Bigl( \|X_{-m}\|_2 \leq K \sqrt{n}\Bigr) \geq \delta \right) \geq 1-\eps.
	\end{equation}
	Then $X$ and $Y$ collide infinitely often almost surely:  the set $\{n\in\N:X_n=Y_n\}$ has infinite cardinality and the set $\{t\in\R_{\geq 0}:X_t=Y_t\}$ has infinite Lebesgue measure almost surely.
\end{theorem}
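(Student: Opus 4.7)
My plan is to apply a Paley--Zygmund argument to the number of backward collisions in large time-windows, and then use a conditional Borel--Cantelli iteration to promote positive quenched probability of a single collision into infinitely many collisions almost surely; the forward-collision conclusion then follows by a stationarity argument. Set $Z_n := \#\{0 \leq m \leq n : X_{-m} = Y_{-m}\}$.

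For the first moment, conditional independence of $X,Y$ given $\eta$ and Cauchy--Schwarz on $B(0,K\sqrt n) \cap \mathbb{Z}^2$ (of cardinality $\lesssim n$) give
$$\mathbb{E}^\eta[Z_n] = \sum_{m=0}^n \sum_{x} \pr^\eta(X_{-m}=x)^2 \;\geq\; \sum_{m=0}^n \frac{\pr^\eta(\|X_{-m}\|_2 \leq K\sqrt n)^2}{|B(0,K\sqrt n)|}.$$
On the event $\Omega_\eps$ from~\eqref{eq:diffusive_assumption} and along the random subsequence $n_k \to \infty$ that it provides, every summand is $\geq \delta^2/(Cn_k)$, so $\mathbb{E}^\eta[Z_{n_k}] \geq c_0 := \delta^2/C$. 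The same estimate restricted to a dyadic window $[n_k/2, n_k]$ gives at least a constant expected number of collisions per scale.

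The critical step is a matching upper bound on the second moment within such a window. For $m < m'$,
$$\pr^\eta(X_{-m}{=}Y_{-m},\, X_{-m'}{=}Y_{-m'}) = \sum_{x,y}\pr^\eta(X_{-m}{=}x)^2\, q^\eta(x,-m;y,-m')^2,$$
so controlling $\mathbb{E}^\eta[Z_n^2]$ reduces to an on-diagonal heat-kernel bound of the form $\sup_x \sum_y q^\eta(x,s;y,t)^2 \lesssim 1/(t-s)$ in some averaged sense. Since~\eqref{eq:diffusive_assumption} only controls the marginal distribution of $X_{-m}$, extracting such a bound is the main obstacle; I would attempt to exploit the reversibility of the walk at each time slice, so that $\sum_y q^\eta(x,s;y,t)^2$ is itself a return probability of an associated dual walk, combined with a further application of the diffusive assumption to spatial and temporal translates of $\eta$. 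Given such a bound, Paley--Zygmund yields a positive constant lower bound on $\pr^\eta(\text{at least one collision in the window})$.

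Finally, to upgrade positive quenched probability of a single collision into infinitely many collisions almost surely, apply the estimate on a sequence of disjoint dyadic windows chosen along the random subsequence and invoke L\'evy's conditional Borel--Cantelli lemma: by stationarity of $\eta$ and the Markov property, the conditional probability of a collision in window $k$ given the past is bounded below by a positive constant infinitely often a.s., so collisions occur in infinitely many windows a.s. The continuous-time assertion (positive Lebesgue measure of collision times) follows since each integer-time collision is accompanied by a holding time of positive expected length, and infinite forward collisions follow from infinite backward collisions by a stationarity argument comparing the joint laws of $(\eta,X,Y)$ on $[-n,0]$ and $[0,n]$.
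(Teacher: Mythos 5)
Your first-moment computation (Cauchy--Schwarz over a ball of radius $K\sqrt{n}$, summed over dyadic scales along the random subsequence provided by \eqref{eq:diffusive_assumption}) is exactly the paper's, but the core of your argument has a genuine gap that you yourself flag and do not close: the second-moment bound. The hypothesis \eqref{eq:diffusive_assumption} controls only the law of $X_{-m}$ for the walk started at the origin at time zero; it gives no on-diagonal estimate $\sup_x\sum_y q^\eta(x,s;y,t)^2\lesssim (t-s)^{-1}$ at other space-time starting points, and under the hypotheses of Theorem~\ref{thm:main_diffusive} (a general stationary environment, no ellipticity, no Markov or moment assumptions) no such bound is available. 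Reversibility of the walk at each time slice only yields the detailed-balance identity $P^\eta_{s,t}(u,v)=P^\eta_{t,s}(v,u)$, not a quantitative return-probability bound, so the Paley--Zygmund step never gets off the ground. The same problem undermines the conditional Borel--Cantelli iteration: to bound below the quenched probability of a collision in the $k$-th window given the past, you would need the diffusive estimate for walks re-started at the random space-time points reached at the start of that window, which \eqref{eq:diffusive_assumption} does not supply, and the ``Markov property'' you invoke is not among the hypotheses of this theorem.

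The final step is also flawed: infinitely many \emph{backward} collisions do not transfer to forward collisions ``by stationarity,'' because the backward walk is a random walk in the time-reversed environment $R(\eta)$, which need not have the same law as $\eta$ under stationarity alone; moreover, an almost-sure statement about a tail event cannot be obtained by comparing laws on finite windows $[-n,0]$ and $[0,n]$. This transfer is precisely what the paper's Proposition~\ref{prop:EToP} accomplishes via the mass-transport principle: the single fact that $\mathbb{E}^\eta\sum_{n\geq0}\ind{X_{-n}=Y_{-n}}=\infty$ almost surely --- which is exactly what your (and the paper's) first-moment computation gives for every $\eta\in A_{K,\delta}$ --- already implies infinitely many forward collisions almost surely, with no second moment, no Paley--Zygmund, no Borel--Cantelli, and no reversibility of the environment. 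Finally, the continuous-time conclusion needs slightly more care than ``each collision carries a holding time of positive expected length'' (with infinitely many collisions these could degenerate); the paper's Lemma~\ref{lem:discrete_to_continuous_collisions} handles this by re-starting the walks at a small random time and applying Tonelli, but that is a minor repair compared with the gaps above.
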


Under some additional non-degeneracy assumptions, we are able to prove similar infinite-collision theorems in which the two walks $X$ and $Y$ are not required to start at the same location.
 We say a random environment $\eta$ is \textbf{irreducible} if for each two vertices $x$ and $y$ there exist times $s<t$ such that the conditional transition probability $P^\eta_{s,t}(x,y)$ is positive with positive probability. We say that a stationary environment $\eta$ is \textbf{time-ergodic} if it has probability either zero or one to belong to any time-shift-invariant measurable subset of $\Omega$. (Note that being time-ergodic is a stronger condition than being space-time ergodic.)

\begin{corollary} \label{cor:startloc}
Let $\eta:\R\times E_2\rightarrow[0,\infty)$ be a irreducible, time-ergodic, stationary random environment on $\Z^2$ and let $(X_t)_{t\in \R}$ and $(Y_t)_{t\in \R}$ be two doubly-infinite random walks on $\eta$, started at two vertices $x$ and $y$ at time zero, that are conditionally independent given the environment $\eta$. If $\eta$ satisfies the hypotheses of either Theorem~\ref{thm:main} or Theorem~\ref{thm:main_diffusive} then $X$ and $Y$ collide infinitely often almost surely:  the set $\{n\in\N:X_n=Y_n\}$ has infinite cardinality and the set $\{t\in[0,\infty):X_t=Y_t\}$ has infinite Lebesgue measure almost surely.
\end{corollary}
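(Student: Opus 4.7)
The plan is to reduce to Theorem~\ref{thm:main} (or~\ref{thm:main_diffusive}) by showing that the walks occupy a common vertex at a common time with positive probability, then using the Markov property to convert this into infinite collisions, and finally upgrading ``positive probability'' to ``probability one'' via time-ergodicity.

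For the first step, I would use irreducibility together with local integrability of $\eta$ to produce a deterministic $t_0>0$ and vertex $z$ with $\mathbb{P}(X_{t_0}=Y_{t_0}=z)>0$. Concretely, take $z=y$: by irreducibility and time-stationarity there is an environment event of positive probability on which $\mathbb{P}^\eta(X_{t_0}=y)>0$. The second-moment bound in (A1), and the integrability underlying any invariance principle as in (A2), ensures that $\int_0^{t_0}\sum_{w\sim y}\eta_s(\{y,w\})\dif s<\infty$ almost surely, so $\mathbb{P}^\eta\bigl(Y_s=y\text{ for all }s\in[0,t_0]\bigr)\geq\exp\bigl(-\int_0^{t_0}\sum_{w\sim y}\eta_s(\{y,w\})\dif s\bigr)>0$ almost surely; conditional independence of $X$ and $Y$ given $\eta$ then yields the claim. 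On the event $E:=\{X_{t_0}=Y_{t_0}=z\}$, the Markov property for $(X,Y)$ given $\eta$ shows that $(X_{t_0+s})_{s\geq 0}$ and $(Y_{t_0+s})_{s\geq 0}$ are conditionally independent walks in $\eta$ both starting at $z$ at time $t_0$. By spatial stationarity (translating $z$ to the origin) and time-stationarity (shifting $t_0$ to~$0$), Theorem~\ref{thm:main} (or~\ref{thm:main_diffusive}) gives infinite collisions of these walks for almost every $\eta$, so $\mathbb{P}(\text{infinite collisions})\geq\mathbb{P}(E)>0$. The same argument applied at an arbitrary first meeting time shows in addition that $\mathbb{P}(\text{infinite collisions})=\mathbb{P}(X_t=Y_t\text{ for some }t\geq 0)$.

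To upgrade this positive meeting probability to one, set $q^\eta(u,v):=\mathbb{P}^\eta(\text{walks from }u,v\text{ meet in }[0,\infty))$; applying Theorem~\ref{thm:main} at every vertex (using spatial and temporal stationarity and a countable union) gives $q^\eta(u,u)=1$ almost surely for every $u\in\Z^2$. The Markov decomposition
$q^\eta(x,y)=\mathbb{P}^\eta(\text{meet in }[0,T])+\mathbb{E}^\eta\bigl[\mathbbm{1}_{\{\text{no meet in }[0,T]\}}\,q^{\eta(\cdot+T)}(X_T,Y_T)\bigr]$,
combined with time-ergodicity of $\eta$, will be used to propagate the value $1$ from the diagonal to every pair $(u,v)$, so that $q^\eta(x,y)=1$ almost surely. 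The main obstacle is precisely this last step: shifts $\eta(\cdot+\tau)$ by random times do not preserve the stationary law of $\eta$, and the set of possible starting pairs is infinite so uniform lower bounds on $q^\eta(u,v)$ are not immediate; time-ergodicity is what prevents a nontrivial shift-invariant set of environments from harboring a persistent failure to meet.
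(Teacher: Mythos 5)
Your first two steps are fine (and with $t_0$ rounded to an integer using the laziness coming from local integrability, the integer-time collisions survive the shift), but they only yield that $X$ and $Y$ collide infinitely often \emph{with positive probability}. The corollary demands probability one, and your third step --- propagating $q^\eta(u,u)=1$ from the diagonal to $q^\eta(x,y)=1$ via the decomposition at a random time $T$ --- is exactly the part you do not have. As you yourself note, the environment shifted by a random (stopping) time no longer has the stationary law, there is no uniform lower bound on $q^\eta(u,v)$ over the infinitely many pairs the walks may occupy at time $T$, and time-ergodicity alone does not obviously rule out a positive-probability (non-shift-invariant) event on which the walks never meet. So the proposal has a genuine gap at its crux: ``meet with positive probability, then upgrade by ergodicity'' is not carried out, and it is not clear it can be along the lines sketched.

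The paper closes this gap by running the argument in the opposite direction, and this is the idea you are missing. Take two walks $X,X'$ started at the \emph{same} point $0$ at time $0$; by Theorem~\ref{thm:main} (or \ref{thm:main_diffusive}) they collide at infinitely many integer times almost surely, hence also almost surely after conditioning on any event of positive probability. Now condition, given $\eta$, on the event $B=\{X_n=0,\,X'_n=z\}$: this has positive $\mathbb{P}^\eta$-probability whenever $P^\eta_{0,n}(0,z)>0$ (the factor $P^\eta_{0,n}(0,0)>0$ comes from laziness), and given $B$ and $\eta$ the continuations $(X_m)_{m\geq n},(X'_m)_{m\geq n}$ are precisely two conditionally independent walks started at $(0,n)$ and $(z,n)$. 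Hence on the environment event $A_{z,n}=\{P^\eta_{0,n}(0,z)>0\}$, walks started at $(0,n)$ and $(z,n)$ collide infinitely often with conditional probability one --- no upgrade from positive probability is ever needed for the walks, only for the environment event. Irreducibility gives $\mathbb{P}(A_{z,t_0})>0$ for some $t_0$, time-ergodicity gives that $\tau_m\eta\in A_{z,t_0}$ for some $m\leq t$ with probability tending to $1$, and laziness turns this into $\mathbb{P}(A_{z,n})\to1$; stationarity then transfers the conclusion to walks started at $0$ and $z$ at time $0$, and Lemma~\ref{lem:discrete_to_continuous_collisions} handles the Lebesgue-measure statement. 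I would encourage you to redo your write-up with this conditioning device replacing your third step; your positive-probability meeting estimate then becomes unnecessary.
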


\noindent \textbf{Corollaries for the voter model.} 
Let us now briefly describe a corollary of our results for the \emph{voter model} in two-dimensional dynamic random environments.
Roughly speaking, the voter model in the environment $\eta:\R \times E_d \to \R$ is the interacting particle system on $\Z^d$ in which each vertex has an opinion belonging to some finite set and the opinion of $x$ changes to match the opinion of $y$ at rate $\eta_t(\{x,y\})$. 
 Since this model is tangential to the main results of this paper, we omit the precise definition of the model and refer the reader to \cite{liggett2012interacting} for background.
  The following is an immediate consequence of Corollary~\ref{cor:startloc} and the standard duality between the voter model and coalescing random walk described in \cite[§5]{liggett2012interacting} and \cite[§14]{aldous1995reversible}, which readily generalises to the dynamic case.

\begin{corollary} \label{voter}
	Let $\eta:\R\times E_2\rightarrow\R_{\geq 0}$ be a stationary random environment on $\Z^2$. If the \emph{reversal} of $\eta$ satisfies the hypotheses of Corollary~\ref{cor:startloc}, then the only ergodic stationary measures for the voter model in $\eta$ are the constant 
	(a.k.a.\ consensus) measures. 
\end{corollary}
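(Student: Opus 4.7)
The plan is to invoke the classical duality between the voter model and coalescing random walks, noting that in the dynamic setting the dual walks evolve \emph{backwards in time} through the environment, or equivalently as forward-time walks in the reversed environment $\check\eta$. This is precisely why the hypothesis of the corollary is placed on $\check\eta$ rather than $\eta$. I would set up the standard graphical representation: for each edge $e=\{x,y\}$, place a Poisson point process of intensity $\eta_t(e)\,\diff t$ on $\R$ conditionally on $\eta$, and at each atom flip a fair coin to decide whether $x$ adopts $y$'s opinion or vice versa. Tracing opinions backwards through this picture produces, for each site $x$ and each terminal time $t$, an ancestral walker whose position at time $s\in[0,t]$ is the site whose opinion-at-time-$s$ becomes the opinion of $x$ at time $t$; ancestors of distinct sites merge upon meeting, yielding a system of coalescing random walks in $\check\eta$.

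A routine duality computation (as in \cite[\S5]{liggett2012interacting} and \cite[\S14]{aldous1995reversible}, which goes through unchanged in the dynamic setting) then gives, for any initial configuration $\xi_0$ and any two vertices $x,y$,
$$\mathbb{P}\bigl(\xi_t(x)\neq\xi_t(y) \,\big|\, \eta,\xi_0\bigr) \leq \mathbb{P}\bigl(\tau^{x,y} > t \,\big|\, \eta\bigr),$$
where $\tau^{x,y}$ is the coalescence time of two walks in $\check\eta$ started at $x$ and $y$ at time zero. Applying Corollary~\ref{cor:startloc} to $\check\eta$ tells us that two conditionally independent walks in $\check\eta$ collide infinitely often almost surely regardless of their starting positions; coupling the coalescing pair to two independent walks up to their first meeting time then yields $\tau^{x,y}<\infty$ almost surely, so the right-hand side above vanishes as $t\to\infty$. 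For any stationary measure $\mu$ of the voter model in $\eta$, time-stationarity combined with dominated convergence gives
$$\mathbb{P}_\mu\bigl(\xi_0(x)\neq\xi_0(y)\bigr) = \mathbb{P}_\mu\bigl(\xi_t(x)\neq\xi_t(y)\bigr) \xrightarrow[t\to\infty]{} 0,$$
so $\xi_0(x)=\xi_0(y)$ $\mu$-almost surely for every pair $x,y$. Thus $\mu$ is concentrated on constant configurations, and the ergodicity of $\mu$ collapses this support to a single constant, i.e., a consensus measure.

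The one genuinely delicate step, and the one I would expect to be the main obstacle, is justifying the graphical construction and the resulting quenched duality for a general stationary strongly-reversible random environment (rather than an i.i.d.\ or deterministic environment). This comes down to verifying local integrability and joint measurability of $(t,e)\mapsto\eta_t(e)$ --- both of which are built into the paper's definition of a random environment --- together with a careful check that the backwards-in-time dual walks are indeed walks in $\check\eta$ in the sense used by Corollary~\ref{cor:startloc}. All quantitative probabilistic input has already been supplied by that corollary, so no further heat-kernel or moment estimates should be required.
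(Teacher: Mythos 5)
Your proposal is correct and follows exactly the route the paper intends: the corollary is stated there as an immediate consequence of Corollary~\ref{cor:startloc} together with the classical voter-model/coalescing-random-walk duality, with the reversal of $\eta$ appearing precisely because the dual ancestral lineages run backwards in time, and your write-up simply fleshes out that citation (graphical construction, coalescence via coupling to conditionally independent walks, stationarity plus ergodicity forcing consensus). The only cosmetic point is the fair-coin normalisation in your graphical representation, which gives dual jump rates $\eta/2$ rather than $\eta$; since the hypotheses of Corollary~\ref{cor:startloc} are preserved under constant rescaling this changes nothing.
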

	
\noindent \textbf{One-dimensional models.}
Our methods can also be used to prove that one-dimensional stationary random environments have the infinite collision property under a \emph{first moment} condition. This is much simpler than the two-dimensional case. Once this proposition is proven, one can also formulate and prove one-dimensional analogues of Corollaries \ref{cor:startloc} and \ref{voter} similarly to the two-dimensional case; we omit the details.

\begin{proposition} \label{prop:1dim}
	Let $\eta$ be a stationary random environment on $\Z$ with $\|\eta\|_1<\infty$. Then $\eta$ has the infinite collisions property almost surely: If $X$ and $Y$ are two random walks in $\eta$ that are conditionally independent given $\eta,$  then the set $\{n\in\N:X_n=Y_n\}$ has infinite cardinality and the set $\{t\in[0,\infty):X_t=Y_t\}$ has infinite Lebesgue measure almost surely.
\end{proposition}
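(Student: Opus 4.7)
The proof plan parallels that of Theorem~\ref{thm:main_diffusive} but is considerably simpler because in one dimension the walk automatically has a linear spread bound $|X_n| \leq n$ (in discrete time) coming from the bounded-increment property, which is enough to make the Cauchy--Schwarz / pigeonhole step go through: the quenched collision probabilities decay like $1/n$ rather than $1/\sqrt{n}$, but $\sum 1/n$ still diverges. No true diffusive estimate is needed, and so the much weaker first-moment hypothesis $\|\eta\|_1 < \infty$ suffices.

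First I would establish the one-dimensional analogue of the weak diffusive estimate~(\ref{eq:diffusive_assumption}), with $\sqrt{n}$ replaced by $n$: for every $\eps > 0$ there exist $K < \infty$ and $\delta > 0$ such that
$$
\pr\!\left(\limsup_{n\to\infty} \min_{0 \leq m \leq n} \mathbb{P}^\eta\bigl(|X_{-m}| \leq Kn\bigr) \geq \delta\right) \geq 1 - \eps.
$$
In discrete time this is immediate from $|X_{-m}| \leq m \leq n$. In continuous time, I would combine the same idea with a Palm-type bound $\Ex{J_n} \leq Cn$ on the expected number of jumps $J_n$ of $X$ by time $n$, derived from $\|\eta\|_1 < \infty$ together with the space-time stationarity of $\eta$; since $|X_s| \leq J_s$ one concludes as in the discrete case via Markov's inequality and Fatou's lemma applied to the quenched probability.

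With this estimate in hand, the argument proceeds exactly as in the proof of Theorem~\ref{thm:main_diffusive}: the Cauchy--Schwarz pigeonhole step yields
$$
\mathbb{P}^\eta(X_n = Y_n) = \sum_{x \in \Z} \mathbb{P}^\eta(X_n = x)^2 \geq \frac{\delta^2}{2Kn + 1}
$$
on the favourable event, and the Borel--Cantelli / second-moment machinery from the proof of that theorem upgrades this to infinite collisions almost surely. The continuous-time statement about the Lebesgue measure of $\{t : X_t = Y_t\}$ follows by replacing the sum by an integral. The principal obstacle is the expected-jump-count bound $\Ex{J_n} \leq Cn$ in continuous time: since $\eta$ is assumed only stationary (not Markov or reversible), one must carefully transfer the expected jump rate along the walker's trajectory to the expected rate at a fixed site, using the joint space-time stationarity of $\eta$ and the spatial equivariance of the walker's construction from $\eta$ and an independent family of Poisson processes.
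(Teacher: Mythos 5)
Your proposal is correct and follows essentially the same route as the paper: the paper's own (details-omitted) proof bounds displacement by the number of jumps via Proposition~\ref{prop:ticks} (whose $p=1$ case is exactly the Palm-type bound $\mathbb{E}[J_n]\leq n\|\eta\|_1$ that you identify as the main obstacle, and which is proved there via the censored walk and Lemma~\ref{lem:censrevstat}), and then runs the block/pigeonhole argument of Theorem~\ref{thm:main_diffusive} with $Kn$ in place of $K\sqrt{n}$, concluding via Proposition~\ref{prop:EToP} and Lemma~\ref{lem:discrete_to_continuous_collisions}. One small caveat: the upgrade from a divergent quenched expected number of backward collisions to almost surely infinitely many collisions is the mass-transport argument of Proposition~\ref{prop:EToP}, not a Borel--Cantelli or second-moment step, so you should invoke that proposition rather than the label you gave it.
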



\noindent \textbf{About the proof and organisation.}
This remainder of this paper will be divided into two sections. In Section 2 we define necessary terminology, before establishing moment bounds on the number of jumps the random walk takes in a given interval, as well as non-explosivity in Proposition \ref{prop:ticks}. Then, in Proposition \ref{prop:mTDist}, we use the Markov-Type inequality, along with the previously derived moment bounds, to prove a diffusive upper bound on the displacement of the random walk on the environment.

In Section 3, we will use these results to complete the proof of the theorem. In Proposition \ref{prop:EToP}, we extend to the time-varying setting an argument of Hutchcroft and Peres \cite{hutchcroft2015} to give a sufficient condition for dynamic environments to satisfy the infinite collisions property. Namely, we prove, utilizing the Mass-transport Principle, that if the expected number of collisions \emph{of the backwards walks} conditioned on the environment is infinite almost surely, then the number of collisions is infinite almost surely.
Then, in Theorems \ref{thm:main} and \ref{thm:main_diffusive}, we complete the proof by demonstrating that in two dimensions, the diffusive bound on displacement implies the previously derived sufficient condition on the conditional expectations. We finish by proving Corollary \ref{cor:startloc}.

\section{Stationary Random Environments} \label{Background}

Fix $d\geq 1$.
We work on the $d$-dimensional Euclidean lattice $(\Z^d,E_d),$ where $E_d=\{\{x,y\}\in\Z^d\times\Z^d:\norm{x-y}_1=1\}.$ We write $x\sim y$ if $\{x,y\}\in E_d,$ and $B(x,r)$ for the $l^1$ ball centred at $x$ with radius $r.$ For each $e=\{x,y\} \in E_d$ and $z \in \Z^d$, we write $e-z$ for the edge $\{x-z,y-z\}$.
We define an \textbf{evironment} to be a non-negative element of the space $L_\mathrm{loc}^1(E_d \times \R)$ of locally integrable, measurable functions $E_d \times \R \to \R$ modulo a.e.\ equivalence, where we recall that $f:E_d \times \R \to \R$ is said to be \textbf{locally integrable} if $\int_a^b |f_t(e)| \dif t<\infty$ for every $a<b$ and every edge $e\in E_d$.
(Here and elsewhere we follow the usual convention of writing the time variable as a subscript.)
 We recall that $L_\mathrm{loc}^1(E_d \times \R)$ can be endowed with a unique topology, called the \emph{local $L^1$ topology}, with the property that $f^n$ converges to $f$ if and only if $\int_a^b |f^n_t(e)-f_t(e)| \dif t\to 0$ as $n\to\infty$ for every $a<b$ and $e\in E_d$.
 We write $\Omega = \{\eta \in L_\mathrm{loc}^1(E_d \times \R) : \eta_t(e) \geq 0 $ for every $e\in E_d$ and a.e.\ $t \in \R\}$ for the space of environments, which we equip with the associated subspace topology and Borel $\sigma$-algebra. For each environment $\eta\in \Omega$ and $x\in \Z^d$ we write $\eta_t(x)=\sum_{y\sim x} \eta_t(\{x,y\})$.

We refer to a random variable taking values in $\Omega$ as a \textbf{random environment}. For each $x\in \Z^d$ and $t\in \R$ we write $\tau_{x,t}:\Omega\to\Omega$ for the space-time shift defined by $\tau_{x,t}\eta_s(e)= \eta_{s-t}(e-x)$ and say that a random environment $\eta$ is \textbf{stationary} if 
$\tau_{x,t} (\eta)$ has the same distribution as $\eta$ for every $x\in \Z^d$ and $t\in \R$. Similarly, we define the time-reversal map $R:\Omega \to \Omega$ by $R(\eta)_{t}(e)=\eta_{-t}(e)$ and say that a random environment $\eta$ is \textbf{reversible} if $R(\eta)$ has the same distribution as $\eta$.
For each $a<b,$ let $\mathcal{F}_{[a,b]}$ be the $\sigma$-algebra generated by the restriction of $\eta$ to $[a,b]$. We say that $\eta$ is a \textbf{Markovian random environment} if $\mathcal{F}_{[a_1,a_2]}$ and $\mathcal{F}_{[c_1,c_2]}$ are conditionally independent given $\mathcal{F}_{[b_1,b_2]}$ whenever $a_2<b_2$ and $c_1>b_1$ (that is, if the past and the future are conditionally independent given the present). For each $t\in \R$, we define the \textbf{instantaneous sigma-algebra} $\mathcal{F}_{t} = \bigcap \{\mathcal{F}_{[a,b]} : a< t < b \}$, and say that $\eta$ is \textbf{strongly reversible} if the conditional distributions of $\eta$ and $R(\eta)$ given $\mathcal{F}_0$ are the same almost surely. For example, if $\theta$ is a uniform random element of $[0,2\pi],$ then the environment $\eta$ defined by $\eta_t(e)=(\sin(t+\theta))_{t\in \R}$ for every $e\in E_d$ and $t\in \R$ is a stationary reversible Markovian environment that is not strongly reversible.

 Let $\Z^d_\infty = \Z^d \cup \{\infty\}$ be the one-point compactification of $\Z^d$ and let $D(\R,\Z^d_\infty)$ be the space of $\Z^d_\infty$-valued càdlàg functions on $\R$, which we equip with the Skorohod topology and associated Borel $\sigma$-algebra. 
The point at infinity is included to deal with the possibility of an explosion.
For each starting space-time location $(u,s)\in\Z^d \times \R$ and environment $\eta \in \Omega,$ there exists a unique probability measure $\mathbb{P}_{u,s}^\eta$ on $D(\R,\Z^d_\infty)$ under which the coordinate process $(X_t)_{t \in \R}$ is a an inhomogeneous continuous time Markov Chain on $\Z^d$ starting at $u$ at time $s$ and with self-adjoint time-dependent generator $(\mathcal{L}_{t}^{\eta})_{t\in \R}$ defined by
\[
\mathcal{L}_{t}^{\eta} f(x)=\sum_{y \sim x} \eta_t(\{x,y\})(f(y)-f(x)).
\]
We denote the transition probabilities of this Markov chain by $P^\eta_{t_1,t_2}(u,v)=\mathbb{P}^\eta_{u,t_1}(X_{t_2}=v)$ for each $t_1, t_2$ and $u,v\in \Z^d$. We say that an environment $\eta$ is \textbf{non-explosive} if $\mathbb{P}_{u,s}^\eta$ is supported on paths that make at most finitely many jumps in any bounded interval of time for every $u\in V$ and $s\in \R$.

\medskip

\textbf{A Poissonian reformulation.} As usual, one can equivalently define the random walk in the environment $\eta$ using Poisson processes rather than generators. We first briefly recall how point processes in $E_d \times \R$ can be used to define walks.
Let $\mathcal{D}$ be the set of subsets $U\subset \R \times E_d$ that are discrete (i.e. consist only of isolated points), and for which $U \cap (E_d \times \{t\})$ contains at most one point for each $t\in \R$. For each $U \in \mathcal{D}$, let $J=J(U)$ be the set of space-time points $(u,t) \in \Z^d \times \R$ such that $(\{u,v\},t)\in U$ for some neighbour $v$ of $u$.
%
Given $U\in \mathcal{D}$ and a space-time coordinate $(u,t)\notin J(U)$, we define the induced cádlág path $F_{u,t}(U)=(F_{u,t}(U)_s)_{s\in \R} \in D(\R,\Z^d)$ which starts with $F_{u,t}(U)=u$ and follows the points of $U$ forwards and backwards in time, traversing an edge $e=\{x,y\}$ at time $s\geq t$ if $\lim_{\varepsilon \downarrow 0} F_{u,t}(U)_{s-\varepsilon} \in\{x,y\}$ and $(e,s)\in U$ and, similarly,
traversing an edge $e=\{x,y\}$ at time $s\leq t$ if $\lim_{\varepsilon \downarrow 0} F_{u,t}(U)_{s+\varepsilon} \in\{x,y\}$ and $(e,s)\in U$.
 We define $T_\infty+$ and $T_\infty^-$ to be the forward and backward explosion times of $F_{u,t}(U)$, and set $F_{u,t}(U)_s=\infty$ for all $s \geq T_\infty^+$ and $s \leq T_\infty^-$. 
 
\medskip

\textbf{Translation and reflection equivariance.}
 An important property of this construction is that for any $U \in \mathcal{D}$ and any two space-time points $(u,s),(v,t) \in (\Z^d \times \R) \setminus J(U)$ we have that
 \begin{equation}
 \label{eq:biject}
F_{u,s}(U)_t = v \iff F_{v,t}(U)_s=u \iff F_{u,s}(U)=F_{v,t}(U).
 \end{equation}
Indeed, if we start a particle at $(u,s)$ then follow the points of $U$ forwards in time until we hit $v$ at time $t \geq s$, then if we instead start at $v$ at time $t$ and follow the points of $U$ backwards in time until time $s,$ we will end up at $u$.
A further important property of the map $F:\mathcal{D}\times \Z^d \times \R$ is that it is equivariant with respect to space-time shifts and time-reversals. That is, if we define the space-time shifts 
\begin{align*}
\tau_{x,t} : \mathcal{D} &\longrightarrow \mathcal{D} & \tau_{x,t} : D(\R,\Z^d_\infty) &\longrightarrow D(\R,\Z^d_\infty)\\
U &\longmapsto \bigl\{(e-x,s-t) : (e,s) \in U\bigr\}
& (\zeta_s)_{s\in \R} &\longmapsto (\zeta_{s-t}-x)_{s\in \R}
\end{align*}
for each $x \in \Z^d$ and $t\in \R$ and the time-reversal maps
\begin{align*}
R : \mathcal{D} &\longrightarrow \mathcal{D} & R : D(\R,\Z^d_\infty) &\longrightarrow D(\R,\Z^d_\infty)\\
U &\longmapsto \bigl\{(e,-s) : (e,s) \in U\bigr\}
& (\zeta_s)_{s\in \R} &\longmapsto \Bigl(\lim_{\varepsilon \downarrow 0} \zeta_{-s+\varepsilon}\Bigr)_{s\in \R}
\end{align*}
then we have that
\[
\tau_{x,t}(F_{u,s}(U)) = F_{u-x,s-t}(\tau_{x,t} (U)) \qquad \text{ and }  \qquad 
R (F_{u,s}(U)) = F_{u,-s}(R (U))
\]
for every $(x,t) \in \Z^d \times \R$, $U \in \mathcal{D}$, and $(u,s)\in (\Z^d \times \R) \setminus J(U)$.

 Given an environment $\eta$, we may take $U$ to be the inhomogeneous Poisson process on $E_d \times \R$ with intensity $\eta$, which belongs to $\mathcal{D}$ almost surely since $\eta$ is locally integrable. It is a standard and easily verified fact that the resulting process $F_{u,t}(U)$ then has law $\mathbb{P}^\eta_{u,t}$ for each $u\in \Z^d$ and $t\in \R$. 
Fixing $\eta$ and taking expectations over $U$ in \eqref{eq:biject} therefore yield the detailed-balance equations
\begin{equation} \label{eq:rev}
P_{s,t}^\eta(u,v)=P_{t,s}^\eta(v,u),
\end{equation}
which also follow directly by self-adjointness of the generators. Moreover, if 
$U$ is a Poisson process with intensity $\eta$, then $R(U)$ is a Poisson process with intensity $R(\eta)$, and it follows that if $X=(X_t)_{t\in \R}$ has law $\mathbb{P}_{u,s}^\eta$, then $R(X)$ has law $\mathbb{P}_{u,-s}^{R(\eta)}$. It follows in particular that if $\eta$ is a stationary reversible random environment and $X=(X_t)_{t\in \R}$ is the associated random walk started at $(u,s)$, then $X$ and $R(X)$ have the same marginal distribution (the \emph{conditional} distributions of these processes given $\eta$ need not be the same).

\subsection{Moment conditions}
\label{sec:Moments}

Let $d\geq 1$ and let $\eta \in \Omega$ be a stationary random environment on $\Z^d$. Recall that we write $\eta_t(x):=\sum_{y\sim x} \eta_t(\{x,y\})$ for the total conductance of all edges incident to $x$ at time $t$. For each  $p\geq 1$ we define the \textbf{infinitesimal $p$-norm} $\|\eta\|_p$ of $\eta$  to be
\begin{equation*} \label{eq:mom1}
\|\eta\|_p:=\sup_{[a,b]\subset\R} \frac{1}{b-a} 
\mathbb{E}\Bigg[\Bigl(\int_a^b \eta_s(0) \diff s\Bigr)^p\Bigg]^{1/p} = \limsup_{\varepsilon \downarrow 0} \frac{1}{\epsilon} 
\mathbb{E}\Bigg[\Bigl(\int_{[0,\epsilon]} \eta_s(0) \dif s\Bigr)^p\Bigg]^{1/p},
\end{equation*}
where the equivalence of these two quantities follows by stationarity and Minkowski's inequality. Note that $\|\eta\|_p$ is increasing in $p\geq 1$ and that if $\eta$ is, say, bounded and a.s.\ c\'adl\'ag, so that $\eta_t(x)$ is well-defined pointwise, then $\|\eta\|_p=\|\eta_t(x)\|_p$ for every $x\in \Z^d$ and $t\in \R$. 

The next proposition shows that first and second moment bounds on the total conductance at a \emph{fixed} vertex imply first and second moment bounds on the number of times the walk jumps. We will deduce in particular that $\|\eta\|_1<\infty$ is a sufficient condition for non-explosivity, generalising \cite[Lemma 4.1]{andres2018}. For each two integers $p \geq 1$ and $1 \leq \ell \leq p$, we write ${ p \brace \ell}$ for the Stirling numbers of the second kind, which are defined to be the unique non-negative integers such that $x^p = \sum_{\ell=1}^p  {p \brace \ell} \ell! \binom{x}{\ell}$ for every $x\in \R$. (Equivalently, ${p \brace \ell}$ is the number of ways to partition a set of size $p$ into $\ell$ non-empty subsets.)

\begin{proposition} \label{prop:ticks} Let $d\geq 1$, let $\eta$ be a stationary random environment on $\Z^d$, let $(u,s)\in \Z^d \times \R$ be a space-time location, and let $X=(X_t)_{t\in \R}$ be the associated random walk started at the origin at time zero.
For each $0\leq a < b$ let $N[a,b]$ denote the cardinality of the set of jump times
	$\{t\in[a,b]:X_{t^-} \neq X_t \}$.
Then
	\[
\mathbb{E}\left[N[a,b]^p\right] \leq \sum_{\ell=1}^p  { p \brace \ell } \ell! |a-b|^\ell \|\eta\|_{\ell}^\ell
	\]
	for every integer $p \geq 1$.
In particular, if $\|\eta\|_1<\infty,$ then $\eta$ is non-explosive almost surely.
\end{proposition}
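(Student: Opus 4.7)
The plan is to compute moments of $N[a,b]$ via the Poissonian construction $U$ of the walker, reducing to the factorial moments through the Stirling identity $x^p = \sum_{\ell=1}^p {p \brace \ell} \ell! \binom{x}{\ell}$, and then bounding each factorial moment using the multivariate Mecke formula together with the mass-transport principle (MTP) and the walker's detailed balance.

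As a blueprint, I would first handle $p=1$. By Mecke applied to $U$ conditional on $\eta$ (equivalently, by the compensator formula), $\mathbb{E}^\eta[N[a,b]] = \int_a^b \mathbb{E}^\eta[\eta_t(X_{t^-})]\,dt$, so the task reduces to showing $\mathbb{E}[\eta_t(X_{t^-})] \leq \|\eta\|_1$. Set $F(x,y;\eta) := P^\eta_{0,t}(x,y)\eta_t(y)$; its expectation is translation-invariant by spatial stationarity of $\eta$, and the MTP on $\Z^d$ gives $\sum_y \mathbb{E}[F(0,y;\eta)] = \sum_x \mathbb{E}[F(x,0;\eta)]$. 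The left-hand side equals $\mathbb{E}[\eta_t(X_{t^-})]$, and the right-hand side is $\mathbb{E}[\eta_t(0)\sum_x P^\eta_{0,t}(x,0)] \leq \mathbb{E}[\eta_t(0)] = \|\eta\|_1$ by the detailed-balance identity \eqref{eq:rev} together with $\sum_x P^\eta_{t,0}(0,x)\leq 1$. Integrating over $[a,b]$ gives $\mathbb{E}[N[a,b]]\leq(b-a)\|\eta\|_1$, which also settles non-explosivity: if $\|\eta\|_1<\infty$, then $N[a,b]<\infty$ almost surely for every bounded $[a,b]$.

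For the factorial moment $\mathbb{E}[N^{(\ell)}]$, the multivariate Mecke formula for the Poisson process $U$ gives
\[
\mathbb{E}^\eta[N^{(\ell)}] = \int_{[a,b]^\ell}\sum_{e_1,\ldots,e_\ell}\prod_{i=1}^\ell\eta_{t_i}(e_i)\,\mathbb{P}^\eta\bigl[\tilde X_{t_i^-}\in e_i\ \text{for all } i\bigr]\,dt_1\cdots dt_\ell,
\]
where $\tilde X$ is the walker on the augmented Poisson configuration $U\cup\{(e_j,t_j)\}_{j=1}^\ell$. Ordering $t_1 < \cdots < t_\ell$ and summing out the edges from $e_\ell$ inwards via the identity $\sum_{e\ni u}\eta_t(e)=\eta_t(u)$ reduces the integrand to $\mathbb{E}^\eta[\prod_i\eta_{t_i}(\tilde X^{(i-1)}_{t_i^-})]$, where $\tilde X^{(i-1)}$ uses only the first $i-1$ added points. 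The crucial observation is that the bijection property \eqref{eq:biject} --- and therefore detailed balance --- holds for the walker built from \emph{any} deterministic point set, so the $p=1$ MTP argument applies to each $\tilde X^{(i-1)}$. Combined with an iterated Hölder inequality, this bounds the integrand uniformly by a quantity controlled by $\|\eta\|_\ell^\ell$, and integration yields $\mathbb{E}[N^{(\ell)}] \leq \ell!(b-a)^\ell\|\eta\|_\ell^\ell$. Summing via the Stirling identity completes the estimate on $\mathbb{E}[N^p]$.

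The main obstacle is the augmented walker in the Mecke expansion: since the added points force specific jumps, $\tilde X^{(i-1)}_{t_i^-}$ depends on $(e_1,t_1),\ldots,(e_{i-1},t_{i-1})$ in a chained way that obstructs naive translation-invariance in those augmentation data. The resolution is that the two ingredients of the MTP + detailed-balance inequality --- spatial stationarity of $\eta$, and the bijection property \eqref{eq:biject} of $F_{u,s}$ on each fixed point configuration --- are both unaffected by the augmentation, so the required inequality holds uniformly over all choices of $(e_j,t_j)$. This is precisely why the Poissonian formulation developed in the preceding subsection is preferable to a purely generator-based one for this argument.
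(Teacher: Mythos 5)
Your $p=1$ argument is correct and is genuinely different from the paper's: instead of the censored finite-volume walk with uniform starting point (whose stationarity is the paper's Lemma~\ref{lem:censrevstat}), you use the Mecke/compensator identity together with a mass-transport and detailed-balance (\eqref{eq:rev}) computation to get $\mathbb{E}[N[a,b]]\leq (b-a)\|\eta\|_1$ directly, which also gives non-explosivity. That part is a clean alternative. The problem is the higher factorial moments, which are the actual content of the proposition (the paper needs $p=2$ later), and there your sketch has a genuine gap at exactly the hard point. After the multivariate Mecke formula, the integrand is \emph{not} $\mathbb{E}^\eta\bigl[\prod_i \eta_{t_i}(\tilde X^{(i-1)}_{t_i^-})\bigr]$: since $\tilde X^{(i-1)}$ depends on the previously summed edges $e_1,\dots,e_{i-1}$, what you really get is a nested, chained sum of the form $\sum P^\eta_{0,t_1}(0,x_1)\,\eta_{t_1}(\{x_1,y_1\})\,P^\eta_{t_1,t_2}(y_1,x_2)\,\eta_{t_2}(\{x_2,y_2\})\cdots$, with the forced-jump weights interleaved between the transition kernels. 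Your proposed resolution --- that the $p=1$ MTP-plus-detailed-balance inequality ``applies to each $\tilde X^{(i-1)}$, uniformly over the choices of $(e_j,t_j)$'' --- does not make sense as stated: the augmentation edges are not fixed but are summed with $\eta$-dependent weights, and the $p=1$ argument is an \emph{annealed} argument that uses spatial stationarity of the law of $\eta$ jointly with the kernels; it cannot be applied conditionally on the realized trajectory up to time $t_{i-1}$ (or on a fixed augmentation), because conditioning destroys the stationarity you need. Note also that the forced-jump kernel $\eta_t(\{x,y\})/\eta_t(x)$ is stochastic in one direction but not sub-stochastic in the other, so the composite kernel of the augmented walk is not doubly sub-stochastic and the naive MTP bound fails for it.

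What is true is that the whole chain, kept inside a single annealed expectation, can be attacked by interleaving MTP re-rootings with H\"older (for $\ell=2$ one can re-root at the endpoint, apply Cauchy--Schwarz in the middle, and re-root once more to get $\mathbb{E}[G(t_1,t_2)]\leq\|\eta\|_2^2$), but for $\ell\geq 3$ the peeling is no longer routine: after pulling out one factor by H\"older you are left needing fractional-power moments of sums such as $\sum_y \eta_{t}(y)^{\alpha}P^\eta_{t,t'}(y,0)$, which the MTP (a statement about first moments of linear functionals) does not control, so ``iterated H\"older'' as invoked does not close the induction --- this is precisely the difficulty the paper circumvents with the censored walk, the stationarity of $(\eta_t,Z^k_t)$, and the strengthened induction hypothesis \eqref{eq:induction_hypothesis} on $q$-th powers of conditional expectations. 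So as written the proposal proves the proposition only for $p=1$; to rescue your route for general $\ell$ you would either need to design such an interleaved MTP/H\"older induction explicitly, or settle for a cruder decoupling (e.g.\ AM--GM across the $\ell$ weights, putting all the mass on one site and summing the remaining unweighted steps), which yields finiteness under $\|\eta\|_\ell<\infty$ but with an extra dimension-dependent factor of order $(2d)^{\ell-1}$ rather than the stated bound.
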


The most important consequence of this theorem is the statement that if $\|\eta\|_p<\infty$ for some integer $p\geq 1,$ then $\mathbb{E}\left[N[a,b]^p\right] <\infty$ for every $a<b$. We will only use the cases $p=1,2$ of this proposition, but prove the general case for possible future applications since it is not much more work.

The proof of Proposition~\ref{prop:ticks} will rely on the construction of the \emph{censored random walk} in finite volume, which we now introduce. Let $\eta$ be a stationary random environment on $\Z^d$, 
let $U$ be a Poisson process with intensity $\eta$, and  
let $X=F_{0,0}(U)$ be the associated random walk in $\eta$ started at $(0,0)$.
Consider the sequence of $l_1$ boxes $B_k = B(0,k)\cap\Z^d$ for $k\geq 1$, and let $E_{d,k}$ be the set of edges of $\Z^d$ with both endpoints in $B_k$.
For each $k\geq 1,$ let $S_k$ be a uniform random element of $B_k$ independent of $\eta$ and $U$, and let $X^k=F_{S_k,0}(U)$ be a random walk in $\eta$ started at $(S_k,0)$. Stationarity of $\eta$ implies that $X^k-S_k=(X^k_t-S_k)_{t\in \R}$ and $X$ have the same distribution for every $k\geq 1$.
 
For each $k\geq 1$, let $U^k = U \cap (E_{d,k} \times \R)$, and define the \textbf{censored random walk} $Z^k=(Z^k_t)_{t\in \R}=F_{S_k,0}(U^k)$. 
In other words, the censored random walk $Z^k$ is coupled with the random walk $X^k$ by setting $Z^k_0=X^k_0,$ and then letting $Z^k$  follow the same Poisson point process $U$ as $X^k,$ forwards and backwards in time, but ignoring the edges which lead out of $B_k$. Thus, $Z^k$ is guaranteed to equal to $X^k$ up until the first time $X^k$ leaves the ball $B_k.$
Observe that censored random walks cannot explode since the rate of transition of the walk at any time is bounded above by the total conductance of all the edges contained within the box, which is finite by assumption.

Note that if $\eta$ is a stationary Markovian random environment and $k\geq 1,$ then both $(\eta,X)$ and $(\eta,Z^k)$ are Markov processes in the sense that the future and the past are conditionally independent given the present; see Section~\ref{subsec:Markovtype} for details. However, the censored random walk has the advantage that the associated Markov process admits a stationary \emph{probability} measure. Indeed, we will argue more generally that if $\eta$ is a stationary random environment then $(\eta,Z^k)$ is time-stationary in the sense $\tau_{0,t}(\eta,Z^k):=(\tau_{0,t} (\eta), \tau_{0,t} (Z^k))$ has the same distribution as $(\eta,Z^k)$ for every $k\geq 1$ and $t\in \R$.

\begin{lemma} \label{lem:censrevstat}
	Let $d\geq 1$ and let $\eta$ be a stationary random environment. Then the processes $(\eta_t,Z^k_t)_{t\in\R}$ are stationary for each $k\geq 1$. 
\end{lemma}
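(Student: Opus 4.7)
The plan is to use the Poissonian graphical construction and the equivariance of $F$ recalled in Section~\ref{Background} to reduce the time-stationarity of $(\eta,Z^k)$ to a bijectivity property of the censored dynamics on the finite set $B_k$. Fix $t\in \R$. The equivariance identity $\tau_{0,t}(F_{u,s}(U))=F_{u,s-t}(\tau_{0,t}(U))$ gives
\[
\tau_{0,t}(Z^k)=F_{S_k,-t}(\tau_{0,t}(U^k)).
\]
Since $\eta$ is time-stationary and $U$ given $\eta$ is a Poisson process with intensity $\eta$, the pair $(\eta,U)$ is time-stationary; and since the censoring map $U\mapsto U\cap(E_{d,k}\times \R)$ commutes with time shifts, the pair $(\eta,U^k)$ is time-stationary as well. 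Using that $S_k$ is independent of $(\eta,U)$, applying the measurable map $(\tilde\eta,\tilde U,s)\mapsto (\tilde\eta,F_{s,-t}(\tilde U))$ to the identity $(\tau_{0,t}(\eta),\tau_{0,t}(U^k),S_k)\stackrel{d}{=}(\eta,U^k,S_k)$ reduces $\tau_{0,t}(\eta,Z^k)\stackrel{d}{=}(\eta,Z^k)$ to the claim
\[
(\eta,F_{S_k,-t}(U^k))\stackrel{d}{=}(\eta,F_{S_k,0}(U^k)).
\]

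To establish this identity I will define $\phi:B_k\to B_k$ by $\phi(v):=F_{v,-t}(U^k)_0$, a function of $(\eta,U^k)$ alone, and argue that $\phi$ is almost surely a bijection of $B_k$. Granting this, equation~\eqref{eq:biject} yields $F_{v,-t}(U^k)=F_{\phi(v),0}(U^k)$ for every $v\in B_k$, hence $F_{S_k,-t}(U^k)=F_{\phi(S_k),0}(U^k)$. Since $S_k$ is uniform on $B_k$ and independent of $(\eta,U^k)$, while $\phi$ is a bijection of $B_k$ measurable with respect to $(\eta,U^k)$, the vertex $\phi(S_k)$ is also uniform on $B_k$ conditionally on $(\eta,U^k)$, which gives the displayed identity in distribution.

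The main content is thus the bijectivity of $\phi$. Because $U^k$ is a Poisson process on the finite edge set $E_{d,k}$ with time-integrable intensity, the set $U^k\cap(E_{d,k}\times[-t,0])$ is almost surely finite, and between its successive points the map $v\mapsto F_{v,-t}(U^k)_s$ is constant in $s$. At time $-t$ this map is the identity on $B_k$, hence a bijection. At each subsequent Poisson event $(\{x,y\},s)\in U^k\cap(E_{d,k}\times(-t,0])$, if the map is a bijection just before the event then at most one trajectory is at $x$ and at most one is at $y$; the event simply moves the trajectory (if any) at $x$ to $y$ and the trajectory (if any) at $y$ to $x$, which manifestly preserves bijectivity. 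Induction on the finitely many Poisson events in $(-t,0]$ shows that $\phi$ is a bijection. The only real subtlety is verifying this swap interpretation of the dynamics, which is immediate from the definition of $F$ and the fact that at each Poisson time exactly one edge is activated.
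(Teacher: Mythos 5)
Your proof is correct and follows essentially the same route as the paper: both arguments use the Poisson construction, shift-equivariance of $F$ together with time-stationarity of $(\eta,U)$, and the fact that the censored flow induces a random bijection of $B_k$, so that composing it with the independent uniform starting point $S_k$ again yields a uniform point. The only cosmetic difference is that you verify bijectivity by induction over the finitely many Poisson events, whereas the paper reads it off directly from the inversion property \eqref{eq:biject}; both are fine.
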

	
	\begin{proof}
		Fix $k\geq 1$.
		Let $U$ be a Poisson process with intensity $\eta$ and let $U^k$ be defined as above. We have by \eqref{eq:biject} that if $(u,s),(v,t)\notin J(U),$ then
 \begin{equation}
 \label{eq:biject_recall}
F_{u,s}(U^k)_t = v \iff F_{v,t}(U^k)_s=u \iff F_{u,s}(U^k)=F_{v,t}(U^k).
 \end{equation}
		 One implication of this is that for any $s,t \in \R$, the function $\sigma_{s,t}:B_k\rightarrow B_k$ given by $\sigma_{s,t}(u)=[F_{s,u}(U^k)]_t$ is  almost surely a bijection with the property that
		\[
			F_{s,u}(U^k)=F_{t,\sigma_{s,t}(u)}(U^k)
		\]
		for every $u\in B_k$.
		Letting $S_k$ be a uniform random element of $B_k$ independent of $\eta$ and $U$, we deduce that $S_k$ and $\sigma_{s,t}(S_k)$ have the same conditional distribution given $\eta$ and $U$ and hence that
		\[
		\tau_{0,t} \Bigl( \eta, F_{0,S_k}\bigl(U^k\bigr) \Bigr) =
\tau_{0,t} \Bigl( \eta, F_{t,\sigma_{0,t}(S_k)}\bigl(U^k\bigr) \Bigr)
\sim 
\tau_{0,t} \Bigl( \eta, F_{t,S_k}\bigl(U^k\bigr) \Bigr)
		 \sim \left(\eta,F_{0,S_k}\bigl(U^k\bigr)\right)\]
		 for every $t\in \R$, where we used stationarity of $\eta$ and shift-equivariance of $F$ in the final equality in distribution. This completes the proof of stationarity.
 \qedhere
	\end{proof}

We will deduce Proposition~\ref{prop:ticks} from the following analogous statement for the censored random walk.

\begin{lemma} \label{lem:censoredticks} Let $d\geq 1$, let $\eta$ be a stationary random environment on $\Z^d$, let $k \geq 1$, and let $Z^k$ be the censored random walk in $\eta$.
For each $0\leq a < b$ let $N_k[a,b]$ denote the cardinality of the set of jump times
	$\{t\in[a,b]:Z^k_{t^-} \neq Z^k_t \}$. Then
	\[
\mathbb{E}\left[N_k[a,b]^p\right] \leq \sum_{\ell=1}^p  { p \brace \ell } \ell! |a-b|^\ell \|\eta\|_{\ell}^\ell
	\]
for every integer $p \geq 1$.
\end{lemma}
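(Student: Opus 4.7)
The plan is to bound the factorial moments $\mathbb{E}[N_k[a,b]^{(\ell)}] := \mathbb{E}\bigl[N_k[a,b](N_k[a,b]-1)\cdots(N_k[a,b]-\ell+1)\bigr]$ for each $1 \leq \ell \leq p$ and to assemble them via the Stirling-number identity $x^p = \sum_{\ell=1}^p {p \brace \ell} x^{(\ell)}$. My main tool is the multivariate Slivnyak--Mecke formula for the Poisson process $U^k$ (with intensity $\eta$) that drives the censored walk.

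Observing that each jump of $Z^k$ corresponds to a point of $U^k$ whose edge is incident to the walker's current position, I would first write
\[ N_k[a,b]^{(\ell)} = \sum_{\substack{(e_i,t_i)_{i=1}^\ell \\ \text{distinct in } U^k}} \prod_i \mathbf{1}\{t_i \in [a,b],\, Z^k_{t_i^-} \in e_i\} \]
and apply the $\ell$-th Mecke formula to obtain
\[ \mathbb{E}\bigl[N_k[a,b]^{(\ell)}\bigr] = \int_{([a,b]\times E_{d,k})^\ell} \mathbb{E}\Bigl[\prod_i \mathbf{1}\{\widetilde Z_{t_i^-} \in e_i\}\Bigr] \prod_i \eta_{t_i}(e_i)\, de_i\, dt_i, \]
where $\widetilde Z$ is the censored walk driven by $U^k \cup \{(e_i,t_i)\}_i$ started at $(S_k,0)$. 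Summing each $e_i$ over edges incident to $\widetilde Z_{t_i^-}$ and using $\sum_{e \ni x} \eta_t(e) \leq \eta_t(x)$ reduces this to $\mathbb{E}\bigl[N_k[a,b]^{(\ell)}\bigr] \leq \int_{[a,b]^\ell} \mathbb{E}\bigl[\prod_i \eta_{t_i}(\widetilde Z_{t_i^-})\bigr]\, dt$.

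The crucial observation is now that each added point $(e,t)$ changes the censored walk at time $t$ by either a transposition of the two endpoints of $e$ or the identity, and hence the bijection argument proving Lemma~\ref{lem:censrevstat} extends verbatim to the augmented walk: for every $t$, the map $S_k \mapsto \widetilde Z_{t^-}$ is almost surely a bijection of $B_k$. Since $S_k$ is uniform on $B_k$ and independent of $\eta$ and the Poisson data, the tuple $(\widetilde Z_{t_i^-})_{i=1}^\ell$ is uniform over $|B_k|$ equally-likely configurations given $\eta$, so a discrete H\"older inequality yields $\mathbb{E}[\prod_i \eta_{t_i}(\widetilde Z_{t_i^-}) \mid \eta] \leq \prod_i \bigl(|B_k|^{-1}\sum_y \eta_{t_i}(y)^\ell\bigr)^{1/\ell}$. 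Taking expectations, spatial stationarity of $\eta$ converts each factor to $\mathbb{E}[\eta_{t_i}(0)^\ell]^{1/\ell}$, and integrating over $(t_1,\ldots,t_\ell) \in [a,b]^\ell$ with a final application of H\"older's inequality in time produces the desired estimate $\ell!(b-a)^\ell \|\eta\|_\ell^\ell$, with $\|\eta\|_\ell$ interpreted through its integral form.

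The hardest step will be carrying out this last chain of estimates rigorously: verifying the bijection property in the augmented process, combining it with the discrete H\"older bound, and handling the fact that $\eta$ is defined only up to a.e.\ equivalence by working with time integrals throughout. For the case $p=1$, which is the only one actually needed for non-explosivity, the argument simplifies considerably: the compensation formula gives $\mathbb{E}[N_k[a,b]] \leq \mathbb{E}[\int_a^b \eta_s(Z^k_{s^-}) ds]$, and stationarity of $(\eta, Z^k)$ with uniformity of $S_k$ and spatial stationarity of $\eta$ shows this equals $(b-a)\mathbb{E}[\eta_0(0)] = (b-a)\|\eta\|_1$.
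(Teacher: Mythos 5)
Your route---factorial moments assembled by the Stirling identity and bounded via the multivariate Mecke formula---is genuinely different from the paper's proof, which discretises $[0,b]$ into $n$ small intervals, works with the indicators $A_{i,n}$ that at least one jump occurs in each small interval, and proves a quenched moment bound by induction on $\ell$ using H\"older and the stationarity from Lemma~\ref{lem:censrevstat}. However, your central reduction has a genuine gap. After the Mecke formula, the augmented walk $\widetilde Z$ is driven by $U^k\cup\{(e_i,t_i)\}_{i}$ and therefore \emph{depends on the edges $e_1,\dots,e_\ell$ that you are summing out}: changing $e_i$ alters the trajectory after time $t_i$ and hence alters $\widetilde Z_{t_j^-}$ for every later $t_j$. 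Consequently you cannot ``sum each $e_i$ over edges incident to $\widetilde Z_{t_i^-}$'' as if that position were fixed, and the claimed bound $\mathbb{E}\bigl[N_k[a,b]^{(\ell)}\bigr]\leq\int_{[a,b]^\ell}\mathbb{E}\bigl[\prod_i\eta_{t_i}(\widetilde Z_{t_i^-})\bigr]\,dt$ is not well posed: there is no single walk $\widetilde Z$ on the right-hand side. For $\ell=1$ there is nothing to interchange and your compensator bound is fine; for $\ell=2$ the step can be patched by hand (between the added times the flow is the fixed $U^k$-flow, and $\sum_{e\ni x}\eta_t(e)f(\text{other endpoint of }e)=\sum_{y\sim x}\eta_t(\{x,y\})f(y)$ absorbs the branching), but for general $\ell$ the sum over added edges produces a branching over trajectories, and controlling it by $\prod_i\bigl(|B_k|^{-1}\sum_y\eta_{t_i}(y)^\ell\bigr)^{1/\ell}$ amounts to a nontrivial inequality for alternating products of permutation matrices (the inter-time flows) and symmetric nonnegative matrices (the conductances at the added times), for which you offer no argument. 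This is exactly the point at which the paper instead runs its induction on $\ell$; and since the lemma is applied with $p=2$ in Corollary~\ref{prop:mTDist}, the cases $\ell\geq2$ cannot be dismissed.

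Two further points. The bijection claim itself is fine: for any fixed discrete configuration (including the added points) the map $S_k\mapsto\widetilde Z_{t^-}$ is a bijection of $B_k$, and conditioning on the Poisson data before applying the discrete H\"older inequality makes that part rigorous. But your final conversion is also incomplete as written: the quantities $\mathbb{E}[\eta_t(0)^\ell]$ are pointwise-in-time moments, which are not defined on $L^1_{\mathrm{loc}}$ equivalence classes, and the naive Jensen comparison with $\|\eta\|_\ell$ goes the wrong way; one needs a Fubini/Lebesgue-differentiation/Fatou argument to show $\mathbb{E}[\eta_t(0)^\ell]\leq\|\eta\|_\ell^\ell$ for almost every $t$ before integrating. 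You flag this, and it is fixable, but it is precisely to avoid such issues that the paper only ever uses the integrated bound $\mathbf{E}^\eta[A_{0,n}]\leq\int_{-b/n}^0\eta_t(Z_0)\,dt$ over small time intervals.
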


	\begin{proof}
		 By stationarity, we can without loss of generality assume that $a=0$. We fix $b\geq 0$ and $k\geq1,$ and write $N=N_k,$ and $Z=Z^k$. 
		For each $n\in\N$ and $i\in\Z$, define \[A_{i,n}=\mathbbm{1}\left(N\left[\frac{(i-1)b}{n},\frac{ib}{n}\right]> 0\right) \qquad \text{ and } \qquad \Sigma_n = \sum_{i=1}^n A_{i,n}.
		\]
		Since $N=\lim_{n\rightarrow\infty}\Sigma_n$ almost surely and $(\Sigma_{2^n})$ is a monotone increasing sequence, the monotone convergence theorem implies that
		$\E{N^p} = \lim_{n\to\infty} \E{\Sigma_{2^n}^p}$ for every $p\geq 0$. 
		Since $\E{\Sigma_{2^n}^p}=\sum_{\ell=1}^p {p \brace \ell} \ell! \mathbb{E}\binom{\Sigma_{2^n}}{\ell}$
		it therefore suffices to prove that
		\[
\mathbb{E} \binom{\Sigma_n}{\ell} = \sum_{1\leq i_1 < \cdots < i_q \leq n} \mathbb{E} \Bigl[\prod_{j=1}^\ell A_{i_j,n}\Bigr] \leq b^\ell \|\eta\|_\ell^\ell
		\]
		for every $\ell \geq 1$.
		Writing $\mathbf{E}^\eta$ for expectations conditional on the environment $\eta$ and the uniform starting point $S_k =Z^k_0 \in B_k$,
		we will prove by induction on $\ell$ that the stronger inequality 
		\begin{equation}
		\mathbb{E}\left[\mathbf{E}^\eta\Bigl[\prod_{j=1}^\ell A_{i_j,n}\Bigr]^q\right] \leq \left(\frac{b\|\eta\|_{q\ell}}{n}\right)^{q\ell}
		\label{eq:induction_hypothesis}
		\end{equation}
		holds for every $n\geq 1$, $\ell \geq 0$, $q\geq 1$, and every increasing sequence $i_1<i_2<\ldots<i_\ell$ in $\R$, where we take the empty product to be $1$. (Note that we do not assume that $q$ is an integer.) 

		The $\ell=0$ case holds vacuously. Assume that the claim holds for some $\ell \geq 0$ and let $i_0<\ldots<i_\ell$ be an increasing sequence of times. Then we have by stationarity (Lemma~\ref{lem:censrevstat}) and the fact that $(Z^k_t)_{t \leq 0}$ and $(Z^k_t)_{t \geq 0}$ are conditionally independent given $\eta$ and $S_k$ that
		\begin{align*}
		\mathbb{E}\left[\mathbf{E}^\eta\Bigl[\prod_{j=0}^\ell A_{i_j,n}\Bigr]^q\right]
		=
		\mathbb{E}\left[\mathbf{E}^\eta\Bigl[\prod_{j=0}^\ell A_{i_j-i_0,n}\Bigr]^q\right]
		\leq
		\mathbb{E}\left[\mathbf{E}^\eta[A_{0,n} ]^q \cdot \mathbf{E}^\eta\Bigl[\prod_{j=1}^\ell A_{i_j,n}\Bigr]^q\right].
		\end{align*}
		Applying H\"older's inequality and the induction hypothesis yields that
		\begin{multline}
		\label{eq:Holder_induction}
		\mathbb{E}\left[\mathbf{E}^\eta\Bigl[\prod_{j=0}^\ell A_{i_j,n}\Bigr]^q\right]
		\leq
		\mathbb{E}\Bigl[\mathbf{E}^\eta[A_{0,n}]^{q(\ell+1)} \Bigr]^{1/(\ell+1)} \mathbb{E}\left[ \mathbf{E}^\eta\Bigl[\prod_{j=1}^\ell A_{i_j,n}\Bigr]^{q(\ell+1)/\ell}\right]^{\ell/(\ell+1)} 
		\\
		\leq \left(\frac{b\|\eta\|_{q(\ell+1)}}{n}\right)^{q\ell}\mathbb{E}\Bigl[\mathbf{E}^\eta[A_{0,n}]^{q(\ell+1)} \Bigr]^{1/(\ell+1)}.
		\end{multline}
		Conditioned on $\eta$ and $Z_0=S_k$, the indicator random variable $A_{0,n}$ is equal to $1$ if and only if at least one of the Poisson clocks attached to an edge incident to $Z_0$ rings in the interval $[-b/n,0]$, so that
		\[
		\mathbf{E}^\eta\left[A_{0,n}\right] = 1-\exp{\left[-\int_{-b/n}^{0}\eta_t(Z_0)\dif t\right]}
		\leq \int_{-b/n}^{0}\eta_t(Z_{0})\dif t
		\]
		and hence by stationarity of $\eta$ that
		\[
		\mathbb{E}\Bigl[\mathbf{E}^\eta[A_{0,n}]^{q(\ell+1)} \Bigr] \leq 
		 \mathbb{E}\left[\left(\int_{-b/n}^{0}\eta_t(Z_{0})\dif t\right)^{q(\ell+1)}\right] \leq \left(\frac{b \|\eta\|_{q(\ell+1)}}{n}\right)^{q(\ell+1)}.
		\]
		Substituting this estimate into \eqref{eq:Holder_induction} completes the induction step and hence the proof of \eqref{eq:induction_hypothesis}.
\qedhere
	\end{proof}

\begin{proof}[Proof of Proposition~\ref{prop:ticks}]
	Fix $b>0.$ Lemma \ref{lem:censoredticks} implies that the first moment of $\max_{0\leq t\leq b}d(Z^k_t,S_k)\leq N_k[0,b]$ is bounded above uniformly in $k$. We also note that for any fixed distance $l>0$ the probability that the distance between $S_k$ and the boundary of $B_k$ is less than $l$ decreases to zero as $k$ tends to infinity. Combining these two observations, the probability that $Z^k-S_k$ hits the boundary of $B_k-S_k$ before time $b$ tends to zero as $k\rightarrow\infty.$ Since $Z^k$ and $X^k$ are equal up to the first time the boundary is hit, and, by stationarity, the law of  $(X^k_t-S_k)_{0\leq t\leq b}$ is equal to the law of $(X_t)_{0\leq t\leq b},$ it follows that $(Z^k_t-S_k)_{0\leq t\leq b}$ converges in distribution to $(X_t)_{0\leq t\leq b}$ as $k\to\infty$.
	It follows that the law of $(N_k[a,b])_{0\leq a\leq b}$ converges weakly to the law of $(N[a,b])_{0\leq a\leq b},$ and hence by Fatou's lemma that
	\[\E{N[a,b]^p}\leq\liminf_{k\rightarrow\infty} \E{N_k[a,b]^p}\leq\sum_{\ell=1}^p  { p \brace \ell } \ell! |a-b|^\ell \|\eta\|_{\ell}^\ell,\] where the second inequality follows by Lemma \ref{lem:censoredticks}.
\end{proof}

\subsection{Diffusive upper bounds via Markov-type inequalities}
\label{subsec:Markovtype}

In this section we use \emph{Markov-type inequalities} to establish diffusive upper bounds on the displacement of random walks in stationary reversible Markovian environments, generalising an argument of Peres, Stauffer, Steif \cite[Theorem 1.9]{Peres2015} from the setting of dynamical percolation. To do this, we will need a version of the Markov-type inequality that applies to Markov processes defined on uncountable state spaces and that need not be well-defined pointwise. The proof of this inequality is in fact very similar to the usual discrete-time proof of Naor, Peres, and Sheffield \cite{MR2239346} as presented in \cite[Lemma 13.15]{MR3616205}. Markov-type inequalities were first studied by Keith Ball in his work on the Lipschitz extension problem \cite{MR1159828}, and have recently found many important applications in probability theory including e.g.\ \cite{gwynne2020anomalous,ganguly2020chemical,lee2020relations,Peres2015,MR4073931}.

 We now introduce the relevant definitions.
Let $\mathbb{X}$ be a Polish space, and let $\mathcal{Z}=\mathcal{Z}(\R,\mathbb{X})$ be the set of Borel-measurable functions from $\R$ to $\mathbb{X}$ modulo almost-everywhere equivalence.
For each $s\in \R$ we define the time-shift $\tau_s : \mathcal{Z} \to \mathcal{Z}$ by $\tau_s \zeta(t)=\zeta(t-s)$ for every $\zeta\in \mathcal{Z}$ and $t\in \R$, and define the reversal $R:\mathcal{Z} \to \mathcal{Z}$ by $R(\zeta)(t)=\zeta(-t)$ for every $\zeta\in \mathcal{Z}$ and $t\in \R$.
 Let $Z$ be a random variable taking values in $\mathcal{Z}$, and for each $a<b$ let $\mathcal{F}_{[a,b]}$ be the $\sigma$-algebra generated by the restriction of $Z$ to $[a,b]$. We say that $Z$ is a \textbf{Markov process} if $\mathcal{F}_{[a_1,a_2]}$ and $\mathcal{F}_{[c_1,c_2]}$ are conditionally independent given $\mathcal{F}_{[b_1,b_2]}$ whenever $a_2<b_2$ and $c_1>b_1$ (that is, if the past and the future are conditionally independent given the present). We say that $Z$ is \textbf{stationary} if $\tau_s \mathcal{Z}$ has the same distribution as $Z$ for every $s\in \R$, and that $Z$ is \textbf{reversible} if $R(Z)$ and $Z$ have the same distribution. For each $t\in \R$, we define the \textbf{instantaneous sigma-algebra} $\mathcal{F}_{t} = \bigcap \{\mathcal{F}_{[a,b]} : a< t < b \}$, and say that $Z$ is \textbf{strongly reversible} if the conditional distributions of $Z$ and $R(Z)$ given $\mathcal{F}_0$ are the same almost surely.

\begin{proposition}[Generalised maximal Markov-type inequality] \label{prop:MTIN}
	Let $\mathbb{X}$ be a Polish space, and let $Z \in \mathcal{Z}(\R,\mathbb{X})$ be a stationary, strongly reversible Markov process. Let $d \geq 1$ and let $f:\mathcal{Z}\to \R^d$ be measurable with respect to the instantaneous sigma-algebra $\mathcal{F}_{0}$ and reversible in the sense that $f(Z)=f(R(Z))$ almost surely.	
	Then we have that
	\begin{equation} \label{eq:discreteGen}
	\mathbb{E}\left[ \max_{0\leq m \leq n} \|f(\tau_{2mt} Z)-f(Z)\|_2 ^2\right] \leq 25n \mathbb{E}\Bigl[  \|f(\tau_t Z)-f(Z)\|_2 ^2\Bigr].
	\end{equation}
	for every $n \geq 1$ and $t>0$ and hence that
	\begin{equation} \label{eq:ctsGen}
	\E{\esssup_{0\leq s\leq t}\|f(\tau_{s} Z)-f(Z)\|_2 ^2}\leq \frac{25 t}{2}  \limsup_{\eps \downarrow 0}\frac{1}{\eps}\mathbb{E}\left[\|f(\tau_{\eps}Z)-f(Z)\|_2^2\right]\end{equation}
	for every $t>0$.
\end{proposition}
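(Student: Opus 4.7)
The plan is to first deduce the continuous inequality \eqref{eq:ctsGen} from the discrete one \eqref{eq:discreteGen} by a straightforward discretization and monotone convergence argument, then to prove \eqref{eq:discreteGen} by adapting the Naor--Peres--Sheffield maximal Markov-type $2$ inequality to our abstract setting, using that the assumptions on $Z$ and $f$ produce a stationary Markov chain together with an observation process that is reversible.

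For the reduction, I would fix $t>0$ and apply \eqref{eq:discreteGen} with $\varepsilon := t/(2n)$ in place of $t$ for each integer $n \geq 1$, which gives
\[
\mathbb{E}\Bigl[\max_{0 \leq m \leq n} \|f(\tau_{mt/n} Z) - f(Z)\|_2^2\Bigr] \leq \frac{25 t}{2 \varepsilon} \mathbb{E}\Bigl[\|f(\tau_\varepsilon Z) - f(Z)\|_2^2\Bigr].
\]
Taking $n$ along powers of $2$ ensures that the grids $\{mt/n : 0 \leq m \leq n\} \subset [0,t]$ refine, so the left-hand side is monotone increasing in $n$; by monotone convergence it converges to $\mathbb{E}\bigl[\esssup_{0 \leq s \leq t} \|f(\tau_s Z) - f(Z)\|_2^2\bigr]$ (with a brief check of joint measurability of $s \mapsto f(\tau_s Z)$ using the Polish-space setup). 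Taking the $\limsup$ of the right-hand side as $\varepsilon \downarrow 0$ yields \eqref{eq:ctsGen}.

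For the discrete inequality, I would fix $t>0$ and $n \geq 1$ and set $W_m := \tau_{mt} Z$ for $m \in \mathbb{Z}$, which is a stationary Markov chain on the Polish space $\mathcal{Z}$ whose transitions are the $t$-time transitions of $Z$. The first task is to verify that the observation process $(f(W_m))_{m = 0}^{2n}$ is reversible, i.e.\ has the same joint law as $(f(W_{2n-m}))_{m=0}^{2n}$: translating strong reversibility of $Z$ to time $mt$ via stationarity gives that the conditional laws of $Z$ and of its reflection around $mt$ given $\mathcal{F}_{mt}$ coincide, and combining this with $\mathcal{F}_0$-measurability of $f$ and the assumption $f(Z) = f(R(Z))$ yields the joint reversibility. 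Once this is established, I would follow the Naor--Peres--Sheffield scheme: pass to the two-step chain $(W_{2m})_m$, whose transition operator $P^2$ is self-adjoint with non-negative spectrum on the natural $L^2$ space, form the Doob martingale $M_m := \mathbb{E}[f(W_{2n}) \mid \sigma(W_0, W_2, \ldots, W_{2m})]$, and apply Doob's $L^2$ maximal inequality together with the reverse-direction martingale obtained by running the chain backwards (which is legal because of the reversibility established above) to control $\max_m \|f(W_{2m}) - f(W_0)\|_2^2$ via the decomposition $f(W_{2m}) - f(W_0) = (f(W_{2m}) - M_m) + (M_m - M_0) + (M_0 - f(W_0))$. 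Combining these bounds with the standard reversible-chain estimate $\mathbb{E}[\|f(W_{2n}) - f(W_0)\|_2^2] \leq 2n \mathbb{E}[\|f(W_1) - f(W_0)\|_2^2]$, which follows from the spectral decomposition of $P$ and the elementary bound $1 - \lambda^{2n} \leq 2n(1-\lambda)$ on $[-1,1]$, should produce the claimed constant $25$.

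The main obstacle will be the careful abstract setup of the Naor--Peres--Sheffield argument: in particular, ensuring that strong reversibility of $Z$ combines correctly with $\mathcal{F}_0$-measurability and reversal-invariance of $f$ to make the observation process of the discrete chain $(W_m)$ reversible in the honest joint-distribution sense, and that the transition operator, Doob martingale, and spectral decomposition formalism survive the passage from the classical countable-state reversible Markov chain setting to our general Polish-space setting in which $f$ is measurable only with respect to the instantaneous sigma-algebra $\mathcal{F}_0$. Once these bookkeeping issues are settled, the algebraic steps of the maximal inequality transfer intact from the discrete case.
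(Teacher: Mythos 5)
Your overall plan (Naor--Peres--Sheffield maximal Markov-type argument plus a discretisation step) is the right family of ideas, but the way you set up the discrete chain breaks the argument. If the state of the chain is $W_m:=\tau_{mt}Z$, then the ``state'' is the \emph{entire shifted path}: $W_0=Z$ already generates the full $\sigma$-algebra, so $\sigma(W_0,\dots,W_{2m})$ is everything and your Doob martingale $M_m=\mathbb{E}[f(W_{2n})\mid\sigma(W_0,\dots,W_{2m})]$ is identically $f(W_{2n})$; moreover $W_{m+1}=\tau_t W_m$ is a deterministic function of $W_m$, so the transition operator of this chain is the unitary composition operator $g\mapsto g\circ\tau_t$, not a self-adjoint Markov operator, and ``$P^2$ self-adjoint with non-negative spectrum'' together with the bound $1-\lambda^{2n}\le 2n(1-\lambda)$ has no content here. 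There is no honest instantaneous state space in this generality; the conditioning has to be done on the instantaneous $\sigma$-algebras $\mathcal{F}_{mt}$ (or on $\mathcal{G}^{\rightarrow}_{mt}=\bigcap_{s>mt}\mathcal{F}_{(-\infty,s]}$ and its mirror), and the property you must extract from strong reversibility is the \emph{conditional} one: $f(\tau_{mt}Z)$ and $f(\tau_{(m-2)t}Z)$ have the same conditional law given $\mathcal{F}_{(m-1)t}$, so that $\mathbb{E}[f(\tau_{mt}Z)-f(\tau_{(m-2)t}Z)\mid\mathcal{F}_{(m-1)t}]=0$. The joint-law reversibility of the observation sequence $(f(W_m))_m$ that you propose to verify is strictly weaker: it already follows from plain reversibility, and the paper's $\sin(t+\theta)$ remark exhibits a stationary reversible Markov process whose observations are reversible in law but which violates the inequality. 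The paper's proof therefore uses no spectral theory at all: it builds a forward and a backward martingale difference sequence $D^{\rightarrow}_m$, $D^{\leftarrow}_m$ adapted to $\mathcal{G}^{\rightarrow}$, $\mathcal{G}^{\leftarrow}$, uses strong reversibility to get $D^{\rightarrow}_m-D^{\leftarrow}_{2n-m+2}=f(\tau_{mt}Z)-f(\tau_{(m-2)t}Z)$, telescopes over even indices, and finishes with Cauchy--Schwarz, Doob's $L^2$ maximal inequality and orthogonality of martingale differences, which is where the constant $25$ comes from. If you insist on an operator route you would have to define $Qg:=\mathbb{E}[g(\tau_tZ)\mid\mathcal{F}_0]$ on a suitable $L^2$ space of $\mathcal{F}_0$-measurable functions and prove self-adjointness from strong reversibility, which is a genuinely missing step in your write-up, not bookkeeping.

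There is also a gap in your reduction of \eqref{eq:ctsGen} from \eqref{eq:discreteGen}. With deterministic dyadic grids, monotone convergence only gives convergence of the left-hand side to $\mathbb{E}\bigl[\sup_{s\in D}\|f(\tau_sZ)-f(Z)\|_2^2\bigr]$ where $D$ is a fixed countable set; since elements of $\mathcal{Z}$ are only defined up to a.e.\ equivalence and no path regularity (c\`adl\`ag, continuity) is assumed, this supremum need not dominate $\esssup_{0\le s\le t}$: the large values may live on a positive-measure set of times avoiding $D$. This is exactly why the paper samples a grid of random mesh (take $\theta$ uniform on $[1/2,1]$, $N=\lceil nt/2\theta\rceil$) and invokes the Lebesgue differentiation theorem to get convergence in probability of the grid maximum to the essential supremum, then concludes by Fatou. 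Your step is fixable by the same device, but as written it does not follow.
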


\begin{remark}
If $\theta$ is a uniform random element of $[0,2\pi]$ then $(X_t)_{t\in \R}=(\sin(t+\theta))_{t\in \R}$ is a stationary reversible Markov process $X:\R \to \R$ that is not strongly reversible and does not satisfy the conclusions of the Markov-type inequality. Indeed, if we consider the identity function $f:\R\to \R$ then
\[
\mathbb{E}\left[ \|f(X_t)-f(X_0)\|_2^2 \right] = \int_0^{2\pi} \left[\sin(t+\theta)-\sin(\theta)\right]^2 \dif \theta = 2\pi \left(1-\cos(t)\right)= \Theta(t^2) \quad \text{ as $t\downarrow 0$},
\]
so that $\mathbb{E}\left[ \|f(X_{nt})-f(X_0)\|_2^2 \right] \gg n \mathbb{E}\left[ \|f(X_{t})-f(X_0)\|_2^2 \right]$ when $t$ is small and $n$ is large. Further processes with similar properties include e.g.\ \emph{piecewise deterministic Markov processes} and the \emph{integrated Ornstein-Uhlenbeck process mod $1$}.
\end{remark}

\begin{proof}[Proof of Proposition~\ref{prop:MTIN}]
	Without loss of generality we may take $d=1$, the higher-dimensional cases following by summing the inequalities \eqref{eq:discreteGen} and \eqref{eq:ctsGen} over the coordinates of  $f$.
	We may also assume that $f$ is bounded, truncating $f$ to $[-r,r]$ and using monotone convergence to take the limit as $r\to\infty$ otherwise.  Note that if $\theta$ is a uniform random number in $[1/2,1]$ and $N=N(\theta,n)=\lceil nt/2\theta \rceil$ for each $n\geq 1$ then $\max_{0\leq m\leq N}\|f(\tau_{2m \theta t/n} Z)-f(Z)\|_2^2$ converges in probability to $\esssup_{0\leq s\leq t}\|f(\tau_{s} Z)-f(Z)\|_2 ^2$ as $n\to\infty$ (this follows by e.g.\ the Lebesgue differentiation theorem), so that \eqref{eq:ctsGen} follows from \eqref{eq:discreteGen} and Fatou's lemma.
	
	The main idea, taken from \cite{MR2239346}, is to write the maximum we are interested in terms of two martingales, one going forwards in time and the other backwards in time, and then use Doob's $L^2$ maximal inequality. 
	For each $t \in \R$, let $\mathcal{G}_t^\rightarrow =\bigcap_{s>t} \mathcal{F}_{(-\infty,s]}$ and let $\mathcal{G}_t^\leftarrow =\bigcap_{s<t} \mathcal{F}_{[s,\infty)}$, so that $\mathcal{F}_t \subseteq \mathcal{G}_t^\rightarrow \cap \mathcal{G}_t^\leftarrow$ for each $r\in \R$. Since $Z$ is a Markov process, $\mathcal{F}_s$ and $\mathcal{G}_t^\rightarrow$ are conditionally independent given $\mathcal{F}_t$ when $s>t$, while $\mathcal{F}_s$ and $\mathcal{G}_t^\leftarrow$ are conditionally independent given $\mathcal{F}_t$ when $s<t$.
	Fix $t>0$ and $n\in\N,$ and for  each $1\leq m\leq 2n$ let 
	\[D_m^\rightarrow= f(\tau_{mt}Z)-\E{f(\tau_{mt}Z)\mid \mathcal{G}_{(m-1)t}^\rightarrow} = f(\tau_{mt}Z)-\E{f(\tau_{mt}Z)\mid\mathcal{F}_{(m-1)t}},\]
	where the almost-sure equivalence of these two quantities follows from the assumption that $Z$ is a Markov-process and that $f$ is $\mathcal{F}_0$-measurable. 
	In particular, the process $( D_i^\rightarrow)_{m=1}^{2n}$ is a martingale difference sequence with respect to the filtration $(\mathcal{G}_{mt}^\rightarrow)_{m=0}^{n}$.
	Similarly, for each $1 \leq m \leq 2n$ we define
	\[D_m^\leftarrow
	=f(\tau_{(2n-m)t}Z)-\E{f(\tau_{(2n-m)t}Z)\mid\mathcal{G}^\leftarrow_{(2n-m+1)t}}
	=f(\tau_{(2n-m)t}Z)-\E{f(\tau_{(2n-m)t}Z)\mid\mathcal{F}_{(2n-m+1)t}}.\]
	As before, the almost-sure equivalence of these quantities follows
	from the assumption that $Z$ is a Markov-process and that $f$ is $\mathcal{F}_0$-measurable.
	In particular, the process $(D_m^\leftarrow)_{m=1}^{2n}$ is a martingale difference sequence with respect to the filtration $(\mathcal{G}_{(2n-m)t}^\leftarrow)_{m=0}^{n}$.
%
	Moreover, for each $2 \leq m \leq 2n$ we have that \begin{multline} \label{eq:diff} D_{m}^\rightarrow-D^\leftarrow_{2n-m+2}=f(\tau_{mt}Z)-f(\tau_{(m-2)t}Z)-\E{f(\tau_{mt}Z)-f(\tau_{(m-2)t}Z)\mid\mathcal{F}_{(m-1)t}}\\=f(\tau_{mt}Z)-f(\tau_{(m-2)t}Z)\end{multline}
	almost surely, where we used stationarity and strong reversibility to deduce  that
$f(\tau_{mt}Z)$ and $f(\tau_{(m-2)t}Z)$ have the same conditional distribution given $\mathcal{F}_{(m-1)t}$ almost surely and hence that 
 the central conditional expectation is almost surely zero. We obtain by algebra that
\begin{align*}
f(\tau_{2kt} Z) - f(Z) = \sum_{m=1}^k D_{2m}^\rightarrow - \sum_{m=1}^k D_{n-2m+2}^\leftarrow
\end{align*}
for every $1 \leq k \leq n$. It follows that
\begin{multline*}
\max_{0\leq k \leq n} |f(\tau_{2kt}Z)-f(Z)| \leq \max_{0\leq k \leq n} \left|\sum_{m=1}^k D_{2m}^\rightarrow \right|+\max_{0\leq k \leq n}\left|\sum_{m=1}^k D_{2n-2m+2}^\leftarrow\right|
\\\leq 
\max_{0\leq k \leq n} \left|\sum_{m=1}^k D_{2m}^\rightarrow \right|+\max_{0\leq k \leq n}\left|\sum_{m=1}^k D_{2m}^\leftarrow\right| + \left|\sum_{m=1}^n D_{2m}^\leftarrow\right|
\end{multline*}
and hence by Cauchy-Schwarz that
\[
\max_{0\leq k \leq n} |f(\tau_{2kt}Z)-f(Z)|^2 
\leq 
\frac{5}{2}\max_{0\leq k \leq n} \left|\sum_{m=1}^k D_{2m}^\rightarrow \right|^2+\frac{5}{2}\max_{0\leq k \leq n}\left|\sum_{m=1}^k D_{2m}^\leftarrow\right|^2 + 5 \left|\sum_{m=1}^n D_{2m}^\leftarrow\right|^2.
\]
Applying Doob's $L^2$ maximal inequality and the orthogonality of martingale differences, we obtain that
\begin{align*}
\E{\max_{0\leq k \leq n} |f(\tau_{2kt}Z)-f(Z)|^2 } &\leq 10 \sum_{m=1}^n \E{\left(D_{2m}^\rightarrow\right)^2} + 15 \sum_{m=1}^n \E{\left(D_{2m}^\leftarrow\right)^2}.
\end{align*}
Using stationarity and reversibility once more, we obtain that 
\begin{align*}
\E{\max_{0\leq k \leq n} |f(\tau_{2kt}Z)-f(Z)|^2 }
&\leq 25 n \E{\left(f(\tau_{t}Z)-\E{f(\tau_t Z) \mid \mathcal{F}_0}\right)^2} 
\\&=25n \E{\left(f(\tau_{t}Z)-f(Z) - \E{f(\tau_t Z) -f(Z) \mid \mathcal{F}_0}\right)^2} 
\\&= 25n \E{\operatorname{Var}(f(\tau_t Z)-f(Z) \mid \mathcal{F}_0)} \leq 25n \operatorname{Var}(f(\tau_t Z)-f(Z)),
\end{align*}
which implies the claim. \qedhere
\end{proof}

Proposition~\ref{prop:MTIN} has the following corollary for random walks in reversible random environments.

\begin{corollary}\label{prop:mTDist}
Let $d\geq 1$, let $\eta$ be a stationary, strongly reversible Markovian random environment on $\Z^d$ and let $X=(X_t)_{t\in \R}$ be the associated random walk started at the origin at time zero. If $\norm{\eta}_2<\infty$, then 
	\[\E{\max_{-t\leq s\leq t} \|X_s-X_0\|^2_2}\leq 25t\norm{\eta}_1\] 
	for every $t\geq 0.$
\end{corollary}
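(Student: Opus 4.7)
The plan is to apply the continuous-time maximal Markov-type inequality (Proposition~\ref{prop:MTIN}) to the pair $(\eta, Z^k)$, where $Z^k$ is the censored random walk in $B_k$ introduced in Section~\ref{sec:Moments}, and then pass to the limit $k \to \infty$. As the instantaneous functional I take $f(\eta,\zeta) := \zeta_0$, which records the walker's position at time $0$; this is $\mathcal{F}_0$-measurable since the walk is càdlàg, and satisfies $f = f \circ R$ for the same reason. Stationarity of $(\eta, Z^k)$ is Lemma~\ref{lem:censrevstat}, while the Markov property and strong reversibility of the pair are inherited from $\eta$ via the equivariance identity $R(F_{u,s}(U^k)) = F_{u,-s}(R(U^k))$ together with the independence of the uniform starting point $S_k$ from both $\eta$ and the time direction.

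Applying Proposition~\ref{prop:MTIN} to this pair, and then once more to its time-reversal to cover both the forward and backward time directions, gives
\[
\mathbb{E}\Bigl[\max_{-t \leq s \leq t} \|Z^k_s - Z^k_0\|_2^2\Bigr] \;\leq\; 25t \cdot \limsup_{\eps \downarrow 0} \frac{1}{\eps} \mathbb{E}\Bigl[\|Z^k_\eps - Z^k_0\|_2^2\Bigr].
\]
To bound the infinitesimal rate, let $N_k[0,\eps]$ denote the number of jumps of $Z^k$ in $[0,\eps]$. Since every jump has unit Euclidean length, we have $\|Z^k_\eps - Z^k_0\|_2^2 \leq N_k[0,\eps]$ on $\{N_k[0,\eps] \leq 1\}$, while on $\{N_k[0,\eps] \geq 2\}$ the inequality $N_k^2 \leq 2N_k(N_k-1)$ yields $\|Z^k_\eps - Z^k_0\|_2^2 \leq 2 N_k[0,\eps](N_k[0,\eps]-1)$. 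Lemma~\ref{lem:censoredticks} gives $\mathbb{E}[N_k[0,\eps]] \leq \eps \|\eta\|_1$, and its proof (with $\ell=2$) also gives $\mathbb{E}\bigl[\binom{N_k[0,\eps]}{2}\bigr] \leq \eps^2 \|\eta\|_2^2$. It is precisely here that the hypothesis $\|\eta\|_2 < \infty$ enters, forcing the excess term to be $O(\eps^2)$ and hence showing that the limsup above is at most $\|\eta\|_1$.

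Finally, I pass to the limit $k \to \infty$. As in the proof of Proposition~\ref{prop:ticks}, extended symmetrically to the two-sided interval $[-t, t]$ by treating forward and backward boundary-hitting events together, the path $(Z^k_s - S_k)_{s \in [-t,t]}$ converges in distribution to $(X_s)_{s \in [-t,t]}$ as $k \to \infty$. Since the functional $\zeta \mapsto \max_{-t \leq s \leq t} \|\zeta_s - \zeta_0\|_2^2$ is lower semi-continuous on Skorohod path space, Fatou's lemma for weak convergence transfers the bound from $Z^k$ to $X$. I expect the principal obstacle to be the careful bookkeeping in Step~1 — verifying strong reversibility of the enlarged process $(\eta, Z^k)$ — since this entangles the strong reversibility of $\eta$, the time-reversal equivariance of the Poisson construction, and the independence of the uniform start $S_k$.
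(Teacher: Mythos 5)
Your proposal is correct and follows essentially the same route as the paper: apply Proposition~\ref{prop:MTIN} to $(\eta,Z^k)$ with $f(\omega,\zeta)=\zeta_0$, bound the infinitesimal rate by the jump-count moments from Lemma~\ref{lem:censoredticks} (which is where $\|\eta\|_2<\infty$ enters), and pass to $k\to\infty$ by weak convergence and Fatou. The only cosmetic difference is that you obtain the two-sided maximum by applying the Markov-type inequality to $Z^k$ and its reversal before taking the limit, whereas the paper proves the one-sided bound $\tfrac{25t}{2}\|\eta\|_1$ for $X$ first and then doubles it by reversibility at the end; both give the stated constant $25t\|\eta\|_1$.
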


	\begin{proof}
		Let $k\geq 1$ and let $(Z^k_t)_{t\in \R}$ be the censored random walk started at a uniform random element $S_k$ of $B_k$ as in Section~\ref{sec:Moments}. By Lemma~\ref{lem:censrevstat}, $(\eta_t,Z^k_t)$ is a stationary Markov process. Moreover, if we consider this process to take values in the space of measurable functions $\mathcal{Z}=\mathcal{Z}(\R,\R^{E_d}\times \Z^d)$ then it is strongly reversible: this follows by time-reversal equivariance of $F$ and the fact that, given $\eta$, the reversed Poisson process $R(U)$ has the same conditional distribution as a Poisson process with intensity $R(\eta)$.
		 Thus, we may apply Proposition~\ref{prop:MTIN} to the function $f: \mathcal{Z} \to \R^d$ given by $f(\omega,\zeta)=\zeta_0$, to obtain that
		\[
		\mathbb{E} \left[ \max_{0 \leq s \leq t} \|Z^k_t-Z^k_0\|_2^2\right] \leq \frac{25t}{2} \limsup_{\eps \downarrow 0} \frac{1}{\eps} \mathbb{E} \left[ \|Z^k_\eps-Z^k_0\|_2^2\right]
		\]
		for every $t>0$ and $k\geq 1$. Since the Euclidean displacement is trivially bounded by the total number of jumps, we obtain that
		\[
		\mathbb{E} \left[ \max_{0 \leq s \leq t} \|Z^k_t-Z^k_0\|_2^2\right] \leq \frac{25t}{2} \limsup_{\eps \downarrow 0} \frac{1}{\eps} \mathbb{E} \left[ N_k[0,\eps]^2\right]
		\leq \frac{25t}{2} \limsup_{\eps \downarrow 0} \frac{1}{\eps} \left( \eps \|\eta\|_1 + 2 \eps^2 \|\eta\|_2^2 \right) = \frac{25t}{2} \|\eta\|_1.
		\]
		Taking the limit as $k\to\infty$, it follows by a similar weak convergence and Fatou argument to that used in the proof of \ref{prop:ticks} that
		\[
		\mathbb{E} \left[ \max_{0 \leq s \leq t} \|X_t-X_0\|_2^2\right] \leq \frac{25t}{2} \|\eta\|_1
		\]
		for every $t\geq 0$ also. The claimed two-sided version of this inequality follows by reversibility.
	\end{proof}

\section{Proof of the main theorem}
In this section will will prove Theorem \ref{thm:main} and its corollaries. We begin with the following general criterion for infinite collisions at integer times, from which our main theorems will be deduced. Recall that we write $\mathbb{E}^\eta$ for conditional expectations given the environment $\eta$.
\begin{proposition} \label{prop:EToP}
	Let $d\geq 1$, let $\eta: \R \times E_d \to [0,\infty)$ be a stationary, non-explosive random environment on $\Z^d$ and let $(X_t)_{t\in \R}$ and $(Y_t)_{t\in \R}$ be random walks in $\eta$, both started at the origin at time zero, that are conditionally independent given $\eta$. Then we have the implication
	\begin{equation} \label{eq:hyp} \biggl(\mathbb{E}^\eta\sum_{n\geq0}\ind{X_{-n}=Y_{-n}}=\infty \text{ almost surely}
	\biggr) \Rightarrow
	 \biggl(\,\sum_{n\geq0}\ind{X_n=Y_n}=\infty \text{ almost surely}
	 \biggr).
	\end{equation} 
\end{proposition}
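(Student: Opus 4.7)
The plan is to apply the Mass-Transport Principle (MTP) for the group $\Z^d \times \Z$ of integer space-time shifts, together with the strong Markov property of the walks. The crucial insight is to design the mass transport so that its left-hand side is bounded by one, letting the hypothesis pin down the right-hand side.

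For each pair of points $(x, s), (y, t) \in \Z^d \times \Z$, define
\[
F(\eta, (x, s), (y, t)) := \ind{t \geq s} \cdot \mathbb{P}^\eta\!\left[X^{(x,s)}_t = Y^{(x,s)}_t = y,\ X^{(x,s)}_n \neq Y^{(x,s)}_n \text{ for all integers } n > t\right],
\]
where $X^{(x,s)}, Y^{(x,s)}$ are conditionally independent walks in $\eta$ from $(x, s)$. The shift-equivariance of the walk construction around \eqref{eq:biject} makes $F$ diagonally invariant, and stationarity of $\eta$ then gives the MTP identity $\E{\sum_{(y, t)} F(\eta, (0, 0), (y, t))} = \E{\sum_{(x, s)} F(\eta, (x, s), (0, 0))}$, with sums over $\Z^d \times \Z$.

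The left-hand inner sum counts the expected number of \emph{last collisions} at integer times $\geq 0$ for walks from $(0, 0)$, which equals $\mathbb{P}^\eta[C^+ < \infty]$ with $C^+ := \sum_{n \geq 0} \ind{X_n = Y_n}$ (since $X_0 = Y_0 = 0$ makes $C^+ \geq 1$ and at most one last collision exists). For the right-hand side, conditioning on $X^{(x, s)}_0 = Y^{(x, s)}_0 = 0$ and applying the Markov property to each walk separately shows the continuations at times $> 0$ are conditionally independent walks from $(0, 0)$; writing $q(\eta) := \mathbb{P}^\eta[X_n \neq Y_n \text{ for all integers } n > 0]$, the right-hand inner sum becomes $q(\eta) \sum_{s \leq 0,\, s \in \Z} \sum_x P^\eta_{s, 0}(x, 0)^2$. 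Detailed balance \eqref{eq:rev} rewrites $P^\eta_{s, 0}(x, 0) = P^\eta_{0, s}(0, x)$ and identifies $\sum_x P^\eta_{s, 0}(x, 0)^2 = \mathbb{P}^\eta[X_s = Y_s]$ for walks from $(0, 0)$, so the double sum equals $\Em{\eta}{\sum_{n \geq 0} \ind{X_{-n} = Y_{-n}}}$.

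Combining, the MTP reads $\p{C^+ < \infty} = \E{q(\eta) \cdot \Em{\eta}{\sum_{n \geq 0} \ind{X_{-n} = Y_{-n}}}}$. The hypothesis forces the inner conditional expectation to be a.s.\ infinite, and since the left-hand side is at most one this forces $q(\eta) = 0$ almost surely --- walks from $(0, 0)$ almost surely collide at some positive integer time. Stationarity and a countable union bound upgrade this to $q(\tau_{z, n}\eta) = 0$ simultaneously for every $(z, n) \in \Z^d \times \Z$ almost surely, and the strong Markov property applied at each successive collision turns ``one further collision'' into ``infinitely many collisions'' by induction. The main obstacle is finding the right transport function: naive choices (e.g.\ transporting $\mathbb{P}^\eta[X_t = Y_t = y]$, which yields $\E{C^+}$ on the left) leave the left-hand side potentially infinite and so unconstrainable by the hypothesis. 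Transporting only \emph{last-collision} mass bounds the left-hand side by $1$ and is what makes the argument go through.
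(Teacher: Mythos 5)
Your proof is correct and takes essentially the same route as the paper: you transport the ``last collision'' probability mass, identify the received mass at $(0,0)$ as $q(\eta)\sum_{s\le 0}\sum_x P^\eta_{s,0}(x,0)^2$, and use the detailed-balance identity \eqref{eq:rev} to turn this into $q(\eta)\,\mathbb{E}^\eta\bigl[\sum_{n\ge 0}\ind{X_{-n}=Y_{-n}}\bigr]$, exactly as in the paper's decomposition of $q^\eta_{\mathrm{fin}}$ via the mass-transport principle. The only cosmetic differences are that you apply the mass-transport principle over the space-time group $\Z^d\times\Z$ rather than the paper's spatial transport plus termwise time-shift invariance, and you finish with a stationarity-plus-strong-Markov induction where the paper simply reads $q^\eta_{\mathrm{fin}}(0,0)=0$ off the same identity.
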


The proof of this proposition is adapted from the methods of \cite{hutchcroft2015}, and relies on the \emph{mass-transport principle} for $\Z^d$.
Recall that a function $f:\Z^d\times\Z^d\rightarrow[0,\infty]$ is said to be a \emph{transport function} if it is diagonally invariant in the sense that $f(x,y)=f(x+z,y+z)$ for every $x,y,z\in\Z^d$. The mass-transport principle for $\Z^d$ states that 
\[
\sum_{x\in\Z^d} f(0,x)=\sum_{x\in\Z^d} f(x,0).
\]
for every transport function $f$.

\begin{proof}
Suppose that $\mathbb{E}^\eta{\sum_{n\geq0}\ind{X_{-n}=Y_{-n}}}=\infty$ almost surely.
Recall that $P_{t_1,t_2}^\eta(\cdot,\cdot)$ denotes the transition probabilities of the random walk conditional on the environment $\eta$.  
For each $u\in\Z^d$ and $n\in \Z$ we let $q_\mathrm{fin}^\eta(u,n)$ denote the conditional probability given $\eta$ that two conditionally independent random walks started at the space-time location $(u,n)$ occupy the same position for only finitely many positive integer times $m\geq n$,
and let $q_0^\eta(u,n)$
denote the conditional probability that the two walks started at $(u,n)$ do not occupy the same position at any integer time strictly greater than $n$.
%
Decomposing according to the last integer time at which the two walks occupy the same position, and where they do so, we get that
\[q_\mathrm{fin}^\eta(u,n) = \sum_{v\in \Z^d}{\sum_{m\geq n}{P_{n,m}^\eta(u,v)^2 q_0^\eta(v,m)}} .\]
By space-shift invariance,  $f(u,v)= \sum_{m\geq 0}\Ex{P_{0,m}^\eta(u,v)^2 q_0^\eta(v,m)}$ is a transport function  and we can apply the mass-transport principle to get that
\[
\E{q^\eta_\mathrm{fin}(0,0)}=\E{ \sum_{v\in \Z^d} \sum_{m\geq 0} {P_{0,m}^{\eta}(0,v)^2 q_0^{\eta}(v,m) }} = 
\E{ \sum_{v\in \Z^d} \sum_{m\geq 0} {P_{0,m}^{\eta}(v,0)^2 q_0^{\eta}(0,m) }},
\]
and hence by time-shift invariance applied to each term that
\begin{align}
\E{q^\eta_\mathrm{fin}(0,0)} &= \E{ \sum_{v\in \Z^d} \sum_{m\geq 0} {P_{-m,0}^{\eta}(v,0)^2 q_0^{\eta}(0,0) }} 
\nonumber
\\&= 
\E{ q_0^{\eta}(0,0) \sum_{v\in \Z^d} \sum_{m\geq 0} {P_{0,-m}^{\eta}(0,v)^2  }} = \E{q_0^\eta(0,0)\Em{\eta}{\sum_{n\geq0}\ind{X_{-n}=Y_{-n}}}}.
\end{align}
Since $q_\mathrm{fin}^\eta(0,0)$ is at most one and
$\mathbb{E}^\eta{\sum_{n\geq0}\ind{X_{-n}=Y_{-n}}}=\infty$ a.s.\ by assumption, we must have that $q_0^\eta(0,0)=0$ a.s.\ and hence that $q_\mathrm{fin}^\eta(0,0)=0$ a.s.\ also. This implies the claim.
\end{proof}

Next, we note that infinite collisions at infinite times quite generally implies that the Lebesgue measure of the set of all positive collision times is infinite almost surely.

\begin{lemma}
\label{lem:discrete_to_continuous_collisions}
	Let $d\geq 1$, let $\eta: \R \times E_d \to [0,\infty)$ be a stationary, non-explosive random environment on $\Z^d$ and let $(X_t)_{t\in \R}$ and $(Y_t)_{t\in \R}$ be random walks in $\eta$, started at $x$ and $y$ at time zero, that are conditionally independent given $\eta$. If the set $\{n \in \N : X_n=Y_n\}$ has infinite cardinality almost surely, then the set $\{t \in [0,\infty) : X_t = Y_t\}$ has infinite Lebesgue measure almost surely.
\end{lemma}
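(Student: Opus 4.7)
The plan is to bound the Lebesgue measure of the continuous-time collision set from below by a sum of positive random ``hold times'' attached to integer-time collisions, and then show this sum diverges via a conditional Borel--Cantelli argument.

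First, for each $n \in \mathbb{N}$ with $X_n = Y_n$, define the exit time $\tau_n := \inf\{t > 0 : X_{n+t} \neq X_n \text{ or } Y_{n+t} \neq X_n\}$. The c\`adl\`ag property and non-explosivity of $X$ and $Y$ ensure $\tau_n > 0$ almost surely, and both walks coincide with $X_n$ on $[n, n+\tau_n)$. Since the unit intervals $[n, n+1)$ are disjoint, this yields
\[
\int_0^\infty \mathbbm{1}_{X_t = Y_t}\, dt \;\geq\; \sum_{n \geq 0} \min(\tau_n, 1)\, \mathbbm{1}_{X_n = Y_n},
\]
so it suffices to show the right-hand side is almost surely infinite.

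For the Borel--Cantelli step, let $\mathcal{H}_n := \sigma(\eta) \vee \sigma(X_s, Y_s : s \leq n)$ and define the $\mathcal{H}_n$-measurable stopping time $T_n := 1 \wedge \inf\{t > 0 : \int_n^{n+t}\eta_s(X_n)\,ds > 1\}$, which is strictly positive almost surely by local integrability of $\eta$. Let $B_n$ denote the event that $X_n = Y_n$ and neither walk jumps during $[n, n+T_n]$. By the Poisson-process representation of Section~\ref{Background}, conditionally on $\mathcal{H}_n$ the walks after time $n$ are driven by independent Poisson processes with intensity $\eta$, giving
\[
\mathbb{P}\bigl(B_n \,\big|\, \mathcal{H}_n \bigr) \;=\; \mathbbm{1}_{X_n = Y_n}\, \exp\!\Bigl(-2 \int_n^{n+T_n} \eta_s(X_n)\, ds\Bigr) \;\geq\; e^{-2}\, \mathbbm{1}_{X_n = Y_n},
\]
where the inequality uses the defining property of $T_n$. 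Moreover $\min(\tau_n, 1) \geq T_n$ on $B_n$, so it is enough to show that $\sum_n T_n \mathbbm{1}_{B_n} = \infty$ almost surely.

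Applying L\'evy's extension of the second Borel--Cantelli lemma to the bounded $\mathcal{H}_{n+1}$-adapted sequence $T_n \mathbbm{1}_{B_n} \in [0,1]$ reduces this to proving $\sum_n T_n \mathbbm{1}_{X_n = Y_n} = \infty$ almost surely. The main obstacle is this final step, since the stopping times $T_n$ could in principle shrink rapidly along the (random) subsequence of integer collision times. To handle it, I will use that $T_n \geq \min(1, 1/M)$ whenever $\int_n^{n+1}\eta_s(X_n)\,ds \leq M$, together with the stationarity of $\eta$: the law of $\int_n^{n+1}\eta_s(v)\,ds$ does not depend on $(v,n)$ and is almost surely finite by local integrability. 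By a monotone limit in $M$ combined with the hypothesis $\sum_n \mathbbm{1}_{X_n = Y_n} = \infty$, one can extract, for each $\omega$ in a full-measure event, a (possibly $\omega$-dependent) $M$ such that infinitely many integer collisions satisfy the integrability bound, yielding the desired divergence.
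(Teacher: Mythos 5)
Your reduction steps are sound: bounding the Lebesgue measure of the collision set below by $\sum_{n\geq 0}\min(\tau_n,1)\ind{X_n=Y_n}$, and the conditional Borel--Cantelli argument (the factor $e^{-2}$ from the definition of $T_n$, with $T_n\ind{B_n}$ bounded and $\mathcal{H}_{n+1}$-measurable and $T_n$ being $\mathcal{H}_n$-measurable) correctly reduce the problem to showing $\sum_{n\geq 0} T_n\ind{X_n=Y_n}=\infty$ almost surely. The gap is in the final step, and it is twofold. First, the asserted implication ``$\int_n^{n+1}\eta_s(X_n)\,\mathrm{d}s\leq M$ implies $T_n\geq\min(1,1/M)$'' is false: the conductance mass can concentrate immediately after time $n$ (e.g.\ total mass $2$ packed into $[n,n+\delta]$ gives $T_n\leq\delta$ for arbitrarily small $\delta$), so a bound on the integral over the whole unit interval gives no lower bound on $T_n$; the correct hypothesis would be $\int_n^{n+1/M}\eta_s(X_n)\,\mathrm{d}s\leq 1$. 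Second, and more seriously, even with such a corrected pointwise statement, the ``extraction'' of a single $M=M(\omega)$ for which infinitely many integer collision times satisfy the bound is exactly the point that needs proof, and no argument is given. From $\sum_n\ind{X_n=Y_n}=\infty$ and $T_n>0$ for every $n$ one cannot conclude that $\limsup_n T_n\ind{X_n=Y_n}>0$: the collision times are random and correlated with the environment, so a priori they could concentrate at times where the local conductance is about to spike and $T_n$ is tiny, making $\sum_n T_n\ind{X_n=Y_n}$ finite. Stationarity of the law of $\int_n^{n+1}\eta_s(v)\,\mathrm{d}s$ at a \emph{fixed} space-time point does not control the environment along the walk at collision instants (the environment seen from the walker is not stationary here), so the monotone-limit-in-$M$ argument does not close this.

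For comparison, the paper's proof avoids estimating hold times at collision instants altogether: it writes the Lebesgue measure of the collision set as $\int_0^1 |\{n: X_{n+s}=Y_{n+s}\}|\,\mathrm{d}s$, restricts to $s<T\wedge 1$ where $T>0$ is the first jump time of either walk (so that the walks started at time $s$ coincide with the original ones), and uses stationarity to see that for each \emph{fixed} $s$ the integrand is almost surely infinite; Tonelli then finishes. If you want to salvage your approach, you need some device of this averaging type (or a mass-transport/stationarity argument) precisely to rule out the conspiracy between collision times and small $T_n$.
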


\begin{proof}
	Let $U_1$ and $U_2$ be two conditionally independent Poisson processes with intensity $\eta$ and let $X^s=F_{0,s}(U_1)$ and $Y^s=F_{0,s}(U_2)$ for each $s \in \R$. It follows by stationarity of $\eta$ that the law of $(X^s,Y^s)$ does not depend on $s$.
	Let $T$ be the infimal positive time at which either of the walks $X^0$ or $Y^0$ takes a jump, so that $0<T \leq \infty$ almost surely and $(X^s,Y^s)=(X^0,Y^0)$ for all $0\leq s<T$. Then we have that
	\begin{align*}
	\mathfrak{Leb}\{t\in[0,\infty) :  X^0_t=Y^0_t\}&=\int_0^1 \abs{\{n \in \N :   X^0_{n+s}=Y^0_{n+s}\}} \dif s  \ 
	\\&\geq\int_0^{T\wedge 1} \abs{\{n \in \N :  X^0_{n+s}=Y^0_{n+s}\}} \dif s = 
	\int_0^{T\wedge 1} \abs{\{n \in \N :  X^s_{n+s}=Y^s_{n+s}\}} \dif s.
	\end{align*}
	Since $T>0$ almost surely and the integrand $\abs{\{n \in \N :  X^s_{n+s}=Y^s_{n+s}\}}$ is almost surely infinite for each $s \geq 0$, it follows by Tonelli's theorem that both sides are almost surely infinite, completing the proof.
\end{proof}

We now apply Proposition~\ref{prop:EToP} and Lemma~\ref{lem:discrete_to_continuous_collisions} to prove Theorems~\ref{thm:main} and \ref{thm:main_diffusive}.

\begin{proof}[Proof of Theorem~\ref{thm:main_diffusive}]
For each $K <\infty$ and $\delta>0$, let $A_{K,\delta} \subseteq \Omega$ be the set of environments $\eta$ such that 
\[
\limsup_{n\to\infty} \min_{0 \leq m \leq n} \mathbb{P}^\eta\left( \|X_{-m}\|_2^2 \leq K n\right) \geq \delta.
\]
By assumption, for every $\eps>0$ there exists $K$ and $\delta$ such that  $\mathbb{P}(\eta\in A_{K,\delta})\geq 1-\eps$. Thus, it suffices to prove that if $K<\infty$ and $\delta>0$ then
$\sum_{m=1}^\infty \mathbb{P}^\eta(X_{-m}=Y_{-m}) =\infty$ for \emph{every} environment $\eta\in A_{K,\delta}$.

 Fix $K<\infty$ and $\delta>0$ and suppose that $\eta\in A_{K,\delta}$ holds. We can recursively define a sequence of positive integer times $n_1,n_2,\ldots$, depending on $\eta$, such that $n_{i+1} \geq 2 (n_i+1)$ for each $i \geq 1$ and
\[\min_{0 \leq m \leq n_i} \mathbb{P}^\eta \left(  \|X_{-m}\|_2^2 \leq K n_i\right) \geq \frac{\delta}{2}\]
for every $i\geq 1$. For each $r\geq 1$, let $\Lambda_r \subseteq \Z^2$ be the set of lattice points with Euclidean norm at most $r$. Then there exists a constant $c$ such that
\[
\mathbb{P}^\eta(X_{-m}=Y_{-m})\geq \sum_{x \in \Lambda_r} P_{0,-m}^\eta(0,x)^2 \geq\frac{1}{|\Lambda_r|} \left[\sum_{x \in \Lambda_r} P_{0,-m}^\eta(0,x)\right]^2 \geq \frac{c}{r^2} \mathbb{P}^\eta(X_{-m} \in \Lambda_r)^2
\]
for every $m,r\geq 1$ and hence that
\begin{multline*}
\sum_{m=n_i+1}^{n_{i+1}} \mathbb{P}^\eta(X_{-m}=Y_{-m})\geq 
\frac{c}{K n_{i+1}} \sum_{m =n_i+1}^{n_{i+1}}\mathbb{P}^\eta(\|X_{-m}\|_2^2 \leq K n_{i+1})^2 \\\geq \frac{c}{2 K} \min_{1\leq m \leq n_{i+1}}\mathbb{P}^\eta\left( \|X_{-m}\|_2^2 \leq K n_{i+1}\right)^2 \geq \frac{c \delta^2}{8 K}
\end{multline*}
for every $i\geq 1$. Summing over $i\geq 1$, it follows that $\sum_{m=1}^\infty \mathbb{P}^\eta(X_{-m}=Y_{-m}) =\infty$ as claimed.
\end{proof}

\begin{proof}[Proof of Theorem~\ref{thm:main}]
It suffices to prove that the conditions (A1) and (A2) each imply the weak diffusive estimate on the backwards process \eqref{eq:diffusive_assumption} needed to apply Theorem~\ref{thm:main_diffusive}. This is obvious in the case (A2) that the backwards process satisfies a (quenched or annealed) invariance principle with Brownian scaling. (It is not a problem if the limiting covariance is random.) 
In the case (A1) that the environment is strongly reversible and Markovian, we have by Markov's inequality and Corollary~\ref{prop:mTDist} that
\begin{multline*}
\mathbb{P}\left( \min_{m\leq n} \mathbb{P}^\eta\left(\|X_{-m}\|^2_2 \leq K n\right) \leq \delta \right) \leq 
\mathbb{P}\left( \mathbb{P}^\eta\left(\max_{m\leq n}  \|X_{-m}\|^2_2 > K n\right) \geq 1-\delta \right)
\\\leq
\mathbb{P}\left(\mathbb{E}^\eta\left[\max_{m\leq n} \|X_{-m}\|^2_2\right]
\geq K (1-\delta) n\right)
\leq \frac{25}{K(1-\delta)}\norm{\eta}_1
\end{multline*}
for every $K<\infty$, $\delta>0$, and $n\geq 1$, and hence by Fatou's lemma that
\[
\mathbb{P}\left( \limsup_{n\to\infty}\min_{m\leq n} \mathbb{P}^\eta\left(\|X_{-m}\|^2_2 \leq K n\right) \leq \delta \right) \leq \frac{25}{K(1-\delta)}\norm{\eta}_1
\]
for every $K<\infty$ and $\delta>0$. This implies the claim.
\end{proof}

We next prove Proposition \ref{prop:1dim}, which concerns the one-dimensional case.

\begin{proof}[Proof of Proposition \ref{prop:1dim}]
	Bounding the total displacement by the number of jumps, Proposition~\ref{prop:ticks} 
	implies that $\mathbb{E}\max_{0\leq m \leq n}\|X_{-m}\| \leq \mathbb{E}N[-n,0] \leq n \|\eta_1\|$ for every $n\geq 1$. In the one dimensional case, this linear bound is sufficient to guarantee that $\mathbb{E}^\eta \sum_{n\geq 0} \mathbbm{1}(X_{-n}=Y_{-n})=0$ almost surely; the details are very similar to the proof of Theorem~\ref{thm:main_diffusive} and are omitted.
\end{proof}

It remains only to prove Corollary~\ref{cor:startloc}, which concerns the case that the two walks do not start at the same vertex, and will be deduced from Theorems \ref{thm:main} and \ref{thm:main_diffusive} together with the following general lemma.
\begin{lemma}
\label{lem:changed_start}
	Let $d\geq 1$ and let $\eta: \R \times E_d \to [0,\infty)$ be an irreducible, time-ergodic, stationary random environment on $\Z^d$. Let $(X_t)_{t\in \R}$, $(X_t')_{t\in \R}$, $(Y_t)_{t\in \R}$, and $(Z_t)_{t\in \R}$ be random walks in $\eta$, started at some vertices $x$, $x$, $y$, and $z$ at time zero respectively, that are conditionally independent given $\eta$. If $\{n\in \N: X_n = X_n'\}$ is infinite almost surely, then $\{n\in \N: Y_n = Z_n\}$ is infinite almost surely.
\end{lemma}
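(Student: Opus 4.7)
The plan is to give a direct coupling argument that bypasses all mass-transport machinery: the walks from $(y,0)$ and $(z,0)$ will be realised as the time-$\geq 0$ portion of walks from $(x,s)$, for a random integer $s \leq 0$ depending measurably on $\eta$, conditioned on their time-$0$ positions being $y$ and $z$ respectively. The hypothesis applied to the time-shifted environment then yields the conclusion immediately.

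First I would show that almost surely there exists an integer $s \leq 0$ with $P^\eta_{s,0}(x,y) \cdot P^\eta_{s,0}(x,z) > 0$. For each $v \in \Z^d$ the event $G_v = \{\eta : P^\eta_{s,t}(x,v)=0 \text{ for all integers } s \leq t\}$ is invariant under integer time shifts, so by time-ergodicity it has probability $0$ or $1$. Irreducibility, combined with the ``stay-at-$v$'' lower bound $P^\eta_{t,t'}(v,v) \geq \exp(-\int_t^{t'}\eta_u(v)\diff u) > 0$ which lets us pass from real to integer time endpoints, rules out probability $1$, so $\mathbb{P}(G_v)=0$. The monotone events $E_v^k = \{\exists \text{ integers } s \leq t \leq k : P^\eta_{s,t}(x,v)>0\}$ increase to $G_v^c$ and are identically distributed by stationarity, so $\mathbb{P}(E_v^0)=1$. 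A further stay-at-$v$ extension yields $\mathbb{P}(\exists \text{ integer } s \leq 0 : P^\eta_{s,0}(x,v)>0)=1$. Taking $v=y$ and $v=z$ and using the ``stay-at-$x$'' trick (again by local integrability of $\eta$ near $x$) to replace the two resulting times $s_1,s_2$ by $\min(s_1,s_2)$ completes this step.

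Next I would couple. On the a.s.\ event above, let $s = s(\eta) \leq 0$ be a measurable choice of such an integer, and let $\tilde X, \tilde X'$ be two conditionally independent walks from $(x,s)$ in $\eta$. By the hypothesis applied to $\tau_{0,-s}\eta \eqd \eta$, we have $\mathbb{P}^\eta(\tilde X_n = \tilde X'_n \text{ for infinitely many integers } n \geq 0) = 1$ almost surely. The event $B^\eta = \{\tilde X_0 = y, \tilde X'_0 = z\}$ has positive $\eta$-conditional probability on our a.s.\ set, so conditioning on $B^\eta$ preserves this a.s.\ event; by the Markov property applied at time $0$, the conditional joint law of $(\tilde X_t, \tilde X'_t)_{t \geq 0}$ given $B^\eta$ equals that of two conditionally independent walks from $(y,0)$ and $(z,0)$ in $\eta$. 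Hence $\mathbb{P}^\eta(\{n \in \N:Y_n=Z_n\} \text{ is infinite})=1$ on this event, and therefore almost surely. The Lebesgue-measure conclusion then follows from Lemma~\ref{lem:discrete_to_continuous_collisions}.

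The main obstacle is the simultaneity requirement in the first step: a priori the $x \to y$ and $x \to z$ reachability conditions could depend on incompatible features of $\eta$, and neither irreducibility nor ergodicity alone is strong enough to force a single realising time for both targets. The stay-at-$x$ trick resolves this elegantly, using only local integrability of $\eta$ near $x$ to let us replace two possibly different realising times by their minimum.
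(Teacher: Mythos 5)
Your argument is correct, but it runs in the opposite time direction to the paper's. The paper first reduces to $x=y=0$, shows via irreducibility, time-ergodicity and the positive stay-put probability that $\mathbb{P}\bigl(P^\eta_{0,n}(0,z)>0\bigr)\to 1$ as $n\to\infty$, and then, for each fixed $n$, conditions the original pair $(X,X')$ started at the origin on the positive-$\mathbb{P}^\eta$-probability event $\{X_n=0,\,X'_n=z\}$; comparing with walks launched at $(0,n)$ and $(z,n)$ and using stationarity gives the lower bound $\mathbb{P}(A_{z,n})$ on the probability of infinitely many $(Y,Z)$-collisions, which tends to $1$. You instead go backwards: you produce, almost surely, a single random integer $s\le 0$ with $P^\eta_{s,0}(x,y)\,P^\eta_{s,0}(x,z)>0$, and condition two walks from $(x,s)$ on their time-$0$ positions, getting conditional probability exactly $1$ in one step with no limiting argument. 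The price is the simultaneity issue you identify (in the paper's reduction one of the two targets is the starting point, so its reachability is free), which your stay-at-$x$/minimum trick handles correctly, and the fact that $s$ is random: restricting $s$ to integers and intersecting over the countably many values, as you implicitly do, makes the appeal to the hypothesis for the shifted environment legitimate. Both proofs otherwise use the same ingredients (stay-put positivity from local integrability, stationarity, the Markov property under $\mathbb{P}^\eta$, and conditioning an $\eta$-a.s.\ event on an event of positive conditional probability). One point to tighten: you apply time-ergodicity to $G_v$, which as written is invariant only under integer time shifts, whereas the paper's hypothesis concerns sets invariant under all real time shifts, and ergodicity of a flow does not imply ergodicity of its time-one map; the fix is exactly the observation you already make, namely that by the stay-put bound $G_v$ coincides with the analogous event quantified over all real pairs $s\le t$, which is genuinely time-shift invariant. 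Finally, the closing appeal to Lemma~\ref{lem:discrete_to_continuous_collisions} is not needed for this lemma (its conclusion concerns only integer collision times) and would in any case require that lemma's non-explosivity hypothesis.
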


\begin{proof}[Proof of Lemma~\ref{lem:changed_start}]
	By stationarity, we can without loss of generality assume that $x=y=0$. For each $z\in\Z^d$ and $t\in\R$ we define $A_{z,t}$ to be the set of environments $\eta$ for which $P^\eta_t(0,z) > 0$.  We will first use irreducibility and time-ergodicity of $\eta$ to prove that $\pr(A_{z,t})\rightarrow 1$ as $t\rightarrow\infty$ for each fixed $z\in \Z^d$. 
	 %
	Irreducibility give us that there exists some $t_0>0$ such that $\eta \in A_{z,t_0}$ with positive probability.
	We deduce by stationarity and time-ergodicity that $\tau_n \eta \in A_{z,t_0}$ for infinitely many positive integers almost surely, and hence that $\mathbb{P}($there exists $m \leq t$ such that $\tau_m \eta \in A_{z,t_0})\to 1$ as $t\to\infty$. Since the walk always has a positive conditional probability not to move in any given time interval, we have that
	\[
	\tau_t \eta \in A_{z,t_0} \iff P^\eta_{t,t+t_0}(0,z)>0 \Rightarrow P^\eta_{0,t+t_0}(0,z)>0 \iff \eta \in A_{z,t+t_0}
	\]
	for every $t\geq 0$, and hence that
	\[
	 \mathbb{P}(\eta \in A_{z,t+t_0}) \geq \mathbb{P}(\text{there exists $m \leq t$ such that $\tau_m \eta \in A_{z,t_0}$})\to 1
	\]
	as $n\to\infty$ as claimed.

	For each $n\in \N$ and $\eta\in A_{z,n},$ the event $B_{u,n}=\{X_n=0, X_n'=z\}$ has positive conditional probability. Let $Y'$ and $Z'$ be random walks on $\eta$, started at $(0,n)$ and $(z,n)$, that are conditionally independent of each other and of $(X,X')$ given $\eta$, so that $(\tau_n Y', \tau_n Z')$ has the same marginal distribution as $(Y,Z)$. We have by the Markov property that
%
	\[
	\pr^{\eta}\left(\sum_{m\geq0}\ind{Y_m'=Z_m'}=\infty\right) = \pr^\eta\left(\sum_{m\geq n}\ind{X_m=Y_m}=\infty\big\vert B_{u,n}\right) = 1
	\] 
	almost surely on the event $A_{z,n}$, and hence by stationarity that
	\[
	\pr\left(\sum_{m\geq0}\ind{Y_m=Z_m}=\infty\right) = \pr\left(\sum_{m\geq 0}\ind{Y_m'=Z_m'}=\infty\right) \geq \pr({A_{z,n}})
	\]
	for every $n\geq 1$. The claim follows since the right hand side tends to $1$ as $n\to\infty$. \qedhere
	
\end{proof}
\subsection*{Acknowledgements}
We thank Sebastian Andres and Jonathan Hermon for helpful comments on a draft of the paper.

\setstretch{1}
\footnotesize{
  \bibliographystyle{abbrv}
  \bibliography{bibliography}

\begin{thebibliography}{10}

\bibitem{aldous1995reversible}
D.~Aldous and J.~Fill.
\newblock Reversible {M}arkov chains and random walks on graphs.

\bibitem{andres2014}
S.~Andres.
\newblock Invariance principle for the random conductance model with dynamic
  bounded conductances.
\newblock {\em Ann. Inst. H. Poincaré Probab. Statist.}, 50:352--374, 05 2014.

\bibitem{andres2018}
S.~Andres, A.~Chiarini, J.-D. Deuschel, and M.~Slowik.
\newblock Quenched invariance principle for random walks with time-dependent
  ergodic degenerate weights.
\newblock {\em Ann. Probab.}, 46(1):302--336, 01 2018.

\bibitem{ACS}
S.~Andres, A.~Chiarini, and M.~Slowik.
\newblock Quenched local limit theorem for random walks among time-dependent
  ergodic degenerate weights.
\newblock 01 2020.

\bibitem{andres2020green}
S.~Andres, J.-D. Deuschel, and M.~Slowik.
\newblock Green kernel asymptotics for two-dimensional random walks under
  random conductances.
\newblock {\em Electronic Communications in Probability}, 25, 2020.

\bibitem{andres2019local}
S.~Andres and P.~A. Taylor.
\newblock Local limit theorems for the random conductance model and
  applications to the {G}inzburg-{L}andau $\nabla\varphi$ interface model.
\newblock {\em arXiv preprint arXiv:1907.05311}, 2019.

\bibitem{avena2012}
L.~Avena.
\newblock Symmetric exclusion as a model of non-elliptic dynamical random
  conductances.
\newblock {\em Electron. Commun. Probab.}, 17:8 pp., 2012.

\bibitem{avena2016class}
L.~Avena, O.~Blondel, and A.~Faggionato.
\newblock A class of random walks in reversible dynamic environments:
  antisymmetry and applications to the east model.
\newblock {\em Journal of Statistical Physics}, 165(1):1--23, 2016.

\bibitem{AVENA20183490}
L.~Avena, O.~Blondel, and A.~Faggionato.
\newblock Analysis of random walks in dynamic random environments via
  {$L^2$}-perturbations.
\newblock {\em Stochastic Processes and their Applications}, 128(10):3490 --
  3530, 2018.

\bibitem{MR1159828}
K.~Ball.
\newblock Markov chains, {R}iesz transforms and {L}ipschitz maps.
\newblock {\em Geom. Funct. Anal.}, 2(2):137--172, 1992.

\bibitem{barlow2012}
M.~T. Barlow, Y.~Peres, and P.~Sousi.
\newblock Collisions of random walks.
\newblock {\em Ann. Inst. H. Poincaré Probab. Statist.}, 48(4):922--946, 11
  2012.

\bibitem{biskup2011recent}
M.~Biskup.
\newblock Recent progress on the random conductance model.
\newblock {\em Probability Surveys}, 8:294--373, 2011.

\bibitem{BISKUP2018985}
M.~Biskup and P.-F. Rodriguez.
\newblock Limit theory for random walks in degenerate time-dependent random
  environments.
\newblock {\em Journal of Functional Analysis}, 274(4):985 -- 1046, 2018.

\bibitem{chen2016gaussian}
X.~Chen.
\newblock Gaussian bounds and collisions of variable speed random walks on
  lattices with power law conductances.
\newblock {\em Stochastic Processes and their Applications},
  126(10):3041--3064, 2016.

\bibitem{CCC}
X.~Chen and D.~Chen.
\newblock Two random walks on the open cluster of {$\mathbb{Z}^2$} meet
  infinitely often.
\newblock {\em Science China Mathematics}, 53(8):1971--1978, 2010.

\bibitem{chen2011}
X.~Chen and D.~Chen.
\newblock Some sufficient conditions for infinite collisions of simple random
  walks on a wedge comb.
\newblock {\em Electron. J. Probab.}, 16:1341--1355, 2011.

\bibitem{SDE}
T.~Delmotte and J.-D. Deuschel.
\newblock On estimating the derivatives of symmetric diffusions in stationary
  random environment, with applications to ${\Delta}\phi$ interface model.
\newblock {\em Probability Theory and Related Fields}, 133:358--390, 11 2005.

\bibitem{devulder2018collisions}
A.~Devulder, N.~Gantert, and F.~Pène.
\newblock Collisions of several walkers in recurrent random environments.
\newblock {\em Electronic Journal of Probability}, 23, 2018.

\bibitem{devulder2019}
A.~Devulder, N.~Gantert, and F.~Pène.
\newblock Arbitrary many walkers meet infinitely often in a subballistic random
  environment.
\newblock {\em Electron. J. Probab.}, 24:25 pp., 2019.

\bibitem{RWME}
D.~Dolgopyat, G.~Keller, and C.~Liverani.
\newblock Random walk in {M}arkovian environment.
\newblock {\em The Annals of Probability}, 36(5):1676--1710, 2008.

\bibitem{gallesco_2013}
C.~Gallesco.
\newblock Meeting time of independent random walks in random environment.
\newblock {\em ESAIM: Probability and Statistics}, 17:257–292, 2013.

\bibitem{ganguly2020chemical}
S.~Ganguly and J.~R. Lee.
\newblock Chemical subdiffusivity of critical 2d percolation.
\newblock {\em arXiv preprint arXiv:2005.08934}, 2020.

\bibitem{gantert2014recurrence}
N.~Gantert, M.~Kochler, and F.~Pene.
\newblock On the recurrence of some random walks in random environment.
\newblock {\em arXiv preprint arXiv:1404.3874}, 2014.

\bibitem{gwynne2020anomalous}
E.~Gwynne and T.~Hutchcroft.
\newblock Anomalous diffusion of random walk on random planar maps.
\newblock {\em Probability Theory and Related Fields}, pages 1--45, 2020.

\bibitem{Helffer1994OnTC}
B.~Helffer and J.~Sjoestrand.
\newblock On the correlation for {K}ac-like models in the convex case.
\newblock {\em Journal of Statistical Physics}, 74:349--409, 1994.

\bibitem{Hermon2020ACP}
J.~Hermon and P.~Sousi.
\newblock A comparison principle for random walk on dynamical percolation.
\newblock {\em arXiv: Probability}, 2020.

\bibitem{hutchcroft2015}
T.~Hutchcroft and Y.~Peres.
\newblock Collisions of random walks in reversible random graphs.
\newblock {\em Electron. Commun. Probab.}, 20:6 pp., 2015.

\bibitem{krishnapur2004}
M.~Krishnapur and Y.~Peres.
\newblock Recurrent graphs where two independent random walks collide finitely
  often.
\newblock {\em Electron. Commun. Probab.}, 9:72--81, 2004.

\bibitem{lee2020relations}
J.~R. Lee.
\newblock Relations between scaling exponents in unimodular random graphs.
\newblock {\em arXiv preprint arXiv:2007.06548}, 2020.

\bibitem{liggett2012interacting}
T.~M. Liggett.
\newblock {\em Interacting particle systems}, volume 276.
\newblock Springer Science \& Business Media, 2012.

\bibitem{MR3616205}
R.~Lyons and Y.~Peres.
\newblock {\em Probability on trees and networks}, volume~42 of {\em Cambridge
  Series in Statistical and Probabilistic Mathematics}.
\newblock Cambridge University Press, New York, 2016.

\bibitem{NashDyn}
J.-C. Mourrat and F.~Otto.
\newblock Anchored {N}ash inequalities and heat kernel bounds for static and
  dynamic degenerate environments.
\newblock {\em Journal of Functional Analysis}, 270, 03 2015.

\bibitem{MR2239346}
A.~Naor, Y.~Peres, O.~Schramm, and S.~Sheffield.
\newblock Markov chains in smooth {B}anach spaces and {G}romov-hyperbolic
  metric spaces.
\newblock {\em Duke Math. J.}, 134(1):165--197, 2006.

\bibitem{Peres2017MixingTF}
Y.~Peres, P.~Sousi, and J.~E. Steif.
\newblock Mixing time for random walk on supercritical dynamical percolation.
\newblock {\em Probability Theory and Related Fields}, 176:809--849, 2017.

\bibitem{peres2017quenched}
Y.~Peres, P.~Sousi, and J.~E. Steif.
\newblock Quenched exit times for random walk on dynamical percolation.
\newblock {\em arXiv preprint arXiv:1707.07619}, 2017.

\bibitem{Peres2015}
Y.~Peres, A.~Stauffer, and J.~E. Steif.
\newblock Random walks on dynamical percolation: mixing times, mean squared
  displacement and hitting times.
\newblock {\em Probability Theory and Related Fields}, 162(3):487--530, Aug
  2015.

\bibitem{MR4073931}
Y.~Peres and T.~Zheng.
\newblock On groups, slow heat kernel decay yields {L}iouville property and
  sharp entropy bounds.
\newblock {\em Int. Math. Res. Not. IMRN}, (3):722--750, 2020.

\bibitem{Redig2018SymmetricSE}
F.~Redig, E.~Saada, and F.~S. Sau.
\newblock Symmetric simple exclusion process in dynamic environment:
  hydrodynamics.
\newblock {\em arXiv: Probability}, 2018.

\bibitem{SPITZER1970246}
F.~Spitzer.
\newblock Interaction of {M}arkov processes.
\newblock {\em Advances in Mathematics}, 5(2):246 -- 290, 1970.

\end{thebibliography}
}

\medskip

\noindent \sc{N.\ Halberstam: CCIMI, University of Cambridge,} \email{nh448@cam.ac.uk}\\
\noindent \sc{T.\ Hutchcroft: Statslab, DPMMS, University of Cambridge,} \email{t.hutchcroft@maths.cam.ac.uk} 

\end{document}